\newtheorem{theorem}{Theorem}[section]
\newtheorem{lemma}[theorem]{Lemma}
\theoremstyle{definition}
\theoremstyle{remark}
\newtheorem{remark}[theorem]{Remark}
\numberwithin{equation}{section}
\newcommand{\PODr}{\sqrt{\sum\limits_{j=r+1}^d\lambda_j}}
\newcommand{\PODR}{\sqrt{\sum\limits_{j=R+1}^d\lambda_j}}
\newcommand{\AvE}{\frac{1}{N+1}{\sum\limits_{n=0}^{N}\|u^n-u_r^n\|}}
\newcommand{\MrI}{\|M_r^{-1}\|_2}
\newtheorem{corollary}{Corollary}[section]
\newtheorem{assumption}{Assumption}
\newcommand{\opartial}{\overline{\partial}}
\newcommand{\by}{{\bf y}}
\newcommand{\bx}{{\bf x}}
\def\PP{{{\rm l}\kern - .15em {\rm P} }}
\def\PN2{{\PP_{N}-\PP_{N-2}}}
\newcommand{\I}{\mathbbm{I}}
\newcommand{\R}{\mathbbm{R}}
\newcommand{\bb}{{\bf b}}
\newcommand{\fnp}{f^{n+1}}
\newcommand{\prp}{P_R^{'}}
\newcommand{\pr}{P_R}
\newcommand{\rn}{r^n}
\newcommand{\ur}{u_r}
\newcommand{\urn}{u_r^n}
\newcommand{\urnp}{u_r^{n+1}}
\newcommand{\un}{u^n}
\newcommand{\unp}{u^{n+1}}
\newcommand{\vr}{v_r}
\newcommand{\wrn}{w_r^n}
\newcommand{\etan}{\eta^n}
\newcommand{\etanp}{\eta^{n+1}}
\newcommand{\prn}{\phi_r^n}
\newcommand{\prnp}{\phi_r^{n+1}}
\begin{document}


\title[Variational Multiscale Proper Orthogonal Decomposition]{Variational Multiscale Proper Orthogonal Decomposition: Convection-Dominated Convection-Diffusion Equations}

\author{Traian Iliescu}
\address{Department of Mathematics, Virginia Polytechnic Institute 
and State University, 456 McBryde Hall, Blacksburg, Virginia 24061}
\email{iliescu@vt.edu}
\thanks{The first author was supported in part by NSF Grants \#DMS-0513542 and 
\#OCE-0620464 and AFOSR grant \#FA9550-08-1-0136}

\author{Zhu Wang}
\address{Department of Mathematics, Virginia Polytechnic Institute 
and State University, 407E McBryde Hall, Blacksburg, Virginia 24061}
\email{wangzhu@vt.edu}
\thanks{The second author was supported in part by NSF Grants \#DMS-0513542 and 
\#OCE-0620464 and AFOSR grant \#FA9550-08-1-0136.}

\subjclass[2000]{Primary 76F65, 65M50; Secondary 76F20, 65M15}

\date{November 24 1, 2010.}


\keywords{Proper orthogonal decomposition, variational multiscale}

\begin{abstract}
We introduce a variational multiscale closure modeling strategy for the numerical
stabilization of proper orthogonal decomposition reduced-order models of 
convection-dominated equations.
As a first step, the new model is analyzed and tested for convection-dominated
convection-diffusion equations.
The numerical analysis of the finite element discretization of the model is presented.
Numerical tests show the increased numerical accuracy over the standard  
reduced-order model and illustrate the theoretical convergence rates.
\end{abstract}

\maketitle

\section{Introduction}
\label{s_introduction}

One of the most successful dynamical systems ideas in the study
of turbulent flows has been the \textit{Proper Orthogonal Decomposition 
(POD)} \cite{HLB96,Sir87abc}. 
POD has been used to generate {\em reduced-order models (ROMs)} for the prediction and 
control of structure dominated turbulent flows 
\cite{AHLS88,bergmann2009enablers,buffoni2006low,NAMTT03,Pod01}.
The idea is straightforward:
Instead of using billions of local finite element basis functions equally distributed
in space, POD uses only a few (usually $\mathcal{O}(10)$) global basis functions
that represent the most energetic structures in the system.
Thus, the computational cost in a {\em direct numerical simulation (DNS)} of a complex flow
can be reduced by orders of magnitude when POD is employed.

Despite its widespread use (hundreds of papers being published every year), 
POD has several well-documented drawbacks.
In this report, we address one of them, namely the numerical instability of a
straightforward POD Galerkin procedure applied to a complex flow 
\cite{aubry1993preserving}.
To address this issue, we draw inspiration from the methodologies used in 
the numerical stabilization of finite element discretization of convection-dominated
flows.
Specifically, we employ the \textit{variational multiscale (VMS)} approach used 
by Layton in \cite{layton2002connection}, which adds artificial viscosity only to
the smallest resolved scales.
We also note that an approach similar to that used in \cite{layton2002connection}
was proposed by Guermond in \cite{guermond1999stabilization,guermond1999stabilisation}.

We emphasize that the VMS philosophy is particularly appropriate to the POD setting,
in which the hierarchy of small and large structures appears naturally.
Indeed, the POD modes are listed in decreasing order of their kinetic energy content.
We also note that, although a VMS-POD approach was announced in 
\cite{borggaard2008reduced,borggaard2011artificial}
and another VMS-POD approach was used in \cite{bergmann2009enablers}, 
to the authors' knowledge this is the first time that
the VMS formulation in \cite{layton2002connection}
has been applied in a POD setting.

In this report, the new VMS-POD model is analyzed and tested in the numerical
approximation of a {\em convection-dominated convection-diffusion} problem
\begin{eqnarray}
\label {cdcd}
\left\{ 
\begin{array}{ll}
u_t
- \varepsilon \, \Delta u
+ \bb \cdot \nabla u
+ g u
= f
& \qquad \text{ in }  (0, T] \times \Omega \, , \\
u(x,0) = u_0(x)
& \qquad \text{ in } \Omega \, , \\
u(x, t) = 0
& \qquad \text{ on }  (0, T]\times \partial \Omega \, , 
\end{array} 
\right.
\end{eqnarray}
where $\varepsilon \ll 1$ is the diffusion parameter, 
${\bf b}$ with $\| {\bf b} \| = \mathcal{O}(1)$ the given convective field,
$g$ the reaction coefficient,
$f$ the forcing term,
$\Omega \subset \R^2$ the computational domain,
$t \in [0,T]$, with $T$ the final time, and
$u_0(\cdot)$ the initial condition.
Without loss of generality, we assume in what follows that the boundary conditions
are homogeneous Dirichlet.
We emphasize that the new VMS-POD model targets turbulent flows described by
the {\em Navier-Stokes equations (NSE)}.
We chose the mathematical setting in \eqref{cdcd}, however, because it
is simple, yet relevant to our ultimate goal (since $\varepsilon \ll \| {\bf b} \|$).
Of course, once we fully understand the behavior of the new VMS-POD model in
this simplified setting, we will analyze and apply it in the NSE setting.

The rest of the paper is organized as follows.
In Section \ref{s_vms_pod}, we briefly describe the POD methodology 
and introduce the new VMS-POD model. 
The error analysis for the finite element discretization of the new 
model is presented in Section \ref{s_estimates}.
The new methodology is tested numerically in Section \ref{s_numerics} 
for a problem displaying shock-like phenomena.
Finally, Section \ref{s_conclusions} presents the conclusions and future 
research directions.

\section{Variational Multiscale Proper Orthogonal Decomposition}
\label{s_vms_pod}

\subsection{Proper Orthogonal Decomposition}
\label{ss_pod}

In this section, we briefly describe the POD.
For a detailed presentation, the reader is referred to \cite{HLB96,KV99,Sir87abc}.

Let $X$ be a real Hilbert space endowed with the inner product $(\cdot,\cdot)_X$, 
and $u(\cdot,t)\in X, t\in[0,T]$ be the state variable of a dynamical system. 
Given the time instances $t_1, \ldots, t_N \in [0,T]$, we consider the ensemble of 
snapshots
\begin{eqnarray}
   V := \mbox{span}\left\{ u(\cdot,t_1), \ldots, u(\cdot,t_N) \right\},
\label{snapshots}
\end{eqnarray}
with dim $V = d$. 
The POD seeks a low-dimensional basis $\{ \varphi_1, \ldots, \varphi_r \}$,
with $r \ll d$, which optimally approximates the input collection.
Specifically, the POD basis satisfies
\begin{eqnarray}
  \min \frac{1}{N} \sum_{i=1}^N \left\| u(\cdot,t_i) - 
  \sum_{j=1}^r \bigl( u(\cdot,t_i) \, , \, \varphi_j(\cdot) \bigr)_X \, \varphi_j(\cdot) 
  \right\|_X^2 \, ,
\label{pod_min}
\end{eqnarray}
subject to the conditions that $(\varphi_i,\varphi_j)_X = \delta_{ij}, \ 1 \leq i, j \leq r$.
In order to solve \eqref{pod_min}, we consider the eigenvalue problem
\begin{eqnarray}
K \, v = \lambda \, v \, ,
\label{pod_eigenvalue}
\end{eqnarray}
where $K \in \R^{N \times N}$, with 
$\displaystyle K_{ij} = \frac{1}{N} \, \bigl( u(\cdot,t_j) , u(\cdot,t_i) \bigr)_X \,$, 
is the snapshot correlation matrix,
$\lambda_1 \geq \lambda_2 \geq \ldots \geq \lambda_d >0$ are the positive eigenvalues, and
$v_k, \, k = 1, \ldots, d$ are the associated eigenvectors.
It can then be shown (see, e.g.,  \cite{HLB96,KV99}), that the solution
of \eqref{pod_min} is given by
\begin{eqnarray}
\varphi_{k}(\cdot) = \frac{1}{\sqrt{\lambda_k}} \, \sum_{j=1}^{N} (v_k)_j \, u(\cdot , t_j),
\quad 1 \leq k \leq r,
\label{pod_basis_formula}
\end{eqnarray}
where $(v_k)_j$ is the $j$-th component of the eigenvector $v_k$.
It can also be shown that the following error formula holds:
\begin{eqnarray}
\frac{1}{N} \sum_{i=1}^N \left\| u(\cdot,t_i) - 
  \sum_{j=1}^r \bigl( u(\cdot,t_i) \, , \, \varphi_j(\cdot) \bigr)_X \, \varphi_j(\cdot) 
  \right\|_X^2
= \sum_{j=r+1}^{d} \lambda_j \, .
\label{pod_error_formula}
\end{eqnarray}
In what follows, we will use the notation 
$X^r = \mbox{span}\{ \varphi_1, \varphi_2, \ldots, \varphi_r \} \, .$
Although $X$ can be any real Hilbert space, in what follows we consider 
$X := H_0^1(\Omega)$.

In the form it has been presented so far, POD seems to be only 
a data compression technique.
Indeed, equation \eqref{pod_min} simply says that the POD basis is the 
best possible approximation of order $r$ of the given data set.
In order to make POD a predictive tool, one couples the POD with the 
Galerkin procedure.
This, in turn, yields a ROM, i.e., a dynamical 
system that represents the evolution in time of the Galerkin truncation.
We now briefly present the derivation of this ROM, we highlight one of its 
main drawbacks and we propose a method to address this deficiency. 

The POD-Galerkin truncation is the approximation $u_r \in X^r$ of $u$:
\begin{eqnarray}
u_r(x, t)
:= \sum_{j=1}^{r} a_j(t) \, \varphi_j(x) .
\label{pod_g_truncation}
\end{eqnarray}
Plugging \eqref{pod_g_truncation} into \eqref{cdcd} and multiplying by test
functions in $X^r \subset X$ yields the {\em POD-Galerkin (POD-G)} model
\begin{eqnarray}
\hspace*{0.8cm} (u_{r, t} , \vr)
+ \varepsilon (\nabla \ur , \nabla \vr)
+ (\bb \cdot \nabla \ur , \vr) 
+ (g \, \ur, \vr) 
= (f , \vr)
\quad \forall \, \vr \in X^r .
\label{pod_g}
\end{eqnarray}
The main advantage of the POD-G model \eqref{pod_g} over a
straightforward {\em finite element (FE)} discretization of \eqref{cdcd} is clear - 
the computational cost of the former is dramatically lower than that of the latter.
There are, however, several well-documented disadvantages of \eqref{pod_g},
such as its numerical instability in convection-dominated flows \cite{sirisup2004spectral}.
To address this issue, we draw inspiration from the methodologies used in  
numerical stabilization of finite element discretizations of such flows.

\subsection{Variational Multiscale}
\label{ss_vms}

The \textit{Variational Multiscale (VMS)} method introduced by Hughes and his group
\cite{Hug95,HMJ00,HMOW01,HOM01} has been successful in the numerical stabilization 
of turbulent flows 
\cite{gravemeier2006consistent,gravemeier2004three,john2005finite,john2008finite_a,john2008finite_b,john2006two}.
The idea in VMS is straightforward: 
Instead of adding artificial viscosity to all resolved scales, in VMS artificial viscosity is
only added to the smallest resolved scales.
Thus, the small scale oscillations are eliminated without polluting the large scale 
components of the approximation.
The VMS method has been extensively developed, various numerical methods 
being used.
The finite element discretization of the resulting VMS model has evolved in several 
directions:
Hughes and his group proposed a VMS formulation for the NSE
in which a Smagorinsky model \cite{BIL05,Sma63} was added only to the smallest resolved scales 
\cite{Hug95,HMJ00,HMOW01,HOM01}.
A different type of VMS approach, based on the residual of the NSE, was proposed 
in Bazilevs et al. \cite{bazilevs2007variational}.
One of the earliest VMS ideas for convection-dominated convection-diffusion equations 
was proposed by Guermond in \cite{guermond1999stabilization,guermond1999stabilisation}.
In this VMS formulation, the smallest scales were modeled by using finite element 
spaces enriched with bubble functions.
Layton proposed in \cite{layton2002connection} a VMS approach similar to that of Guermond.
In this VMS approach, however, the smallest resolved scales were modeled by projection
on a coarser mesh.
The VMS approach proposed in \cite{layton2002connection} was extended to the NSE
in a sequence of papers by John and Kaya 
\cite{john2005finite,john2008finite_a,john2008finite_b,john2006two}.
The variational formulation used by the FE methodology fits very well with the VMS 
approach.
The definition of the smallest resolved scales, however, often poses many challenges
to the FE method.
Indeed, one needs to enrich the FE spaces with bubble functions \cite{guermond1999stabilization,guermond1999stabilisation},
consider hierarchical FE bases \cite{HMJ00}, or use a projection on a coarser mesh
\cite{layton2002connection}.

\subsection{The VMS-POD Model}
\label{ss_model}

POD represents the perfect setting for the VMS methodology, since the hierarchy of 
the basis is already present.
Indeed, the POD basis functions are already listed in descending order of their kinetic
energy content.
Based on this observation, we next propose a VMS based POD model.
To this end, we consider the following spaces:
$
X := H_0^1(\Omega), \,
X^h \subset H_0^1(\Omega), \,
X^r := \text{ span} \{ \varphi_1, \varphi_2, \ldots, \varphi_r \}, \,
X^R := \text{ span} \{ \varphi_1, \varphi_2, \ldots, \varphi_R \}, \, \text{ where } R < r,
\, \text {and } \, 
L^R,
$
where $L^R$ will be defined later.
Note that 
$
X^R \subseteq X^r \subset X^h \subset X.
$
We also consider $\pr : L^2(\Omega) \longrightarrow L^R$, the orthogonal projection of 
$L^2(\Omega)$ on $L^R$, defined by
\begin{eqnarray}
( u - \pr u , v_R ) = 0
\qquad \forall \, v_R \in L^R .
\label{def_pr}
\end{eqnarray}
Let also $\prp := \I - \pr$.
We are now ready to define the \textit{Variational Multiscale Proper Orthogonal Decomposition 
(VMS-POD) model}:
\begin{eqnarray}
\begin{split}
(u_{r, t} , \vr)
& + \varepsilon \, (\nabla \ur , \nabla \vr)
+ \alpha \, (\prp \nabla \ur , \prp \nabla \vr) \\
&  + (\bb \cdot \nabla \ur , \vr) 
+ (g \ur, \vr)
= (f , \vr)
\quad \forall \, \vr \in X^r .
\label{vms_pod}
\end{split}
\end{eqnarray}
The third term on the LHS of \eqref{vms_pod} represents the artificial viscosity
that is added only to the smallest resolved scales of the gradient.
We note that, although a VMS-POD approach was announced in 
\cite{borggaard2008reduced,borggaard2011artificial}
and another one was used in \cite{bergmann2009enablers}, to the authors' knowledge 
this is the first time that
the VMS formulation in \cite{layton2002connection} is applied in a POD setting.

In the next two sections, we will first estimate the error made in the finite element 
discretization of the new VMS-POD model \eqref{vms_pod} and then use
it in a numerical test.

\section{Error Estimates}
\label{s_estimates}

In this section, we prove estimates for the average error
$\displaystyle \frac{1}{N+1} \, \sum_{n=0}^{N} \| u^n - \urn \|$,
where the approximation $u^n$ is the solution of \eqref{cdcd_weak} 
(the weak form of \eqref{cdcd})
and 
$\urn$ is the solution of \eqref{vms_pod_fe} 
(the FE discretization of the VMS-POD model \eqref{vms_pod}). 
To this end, we follow the approach in \cite{heitmann2007subgridscale} 
(see also \cite{kaya2004numerical}).
We emphasize, however, that our presentation is different in that it has to 
include several results pertaining to the POD setting.
To this end, we use some of the developments in \cite{luo2009finite}
(see also \cite{HPS05,KV01,LCNY08,schu2010reduced}).

We start by introducing some notation and we list several results that will 
be used throughout this section.
For clarity of notation, we will denote by $C$ a generic
constant that can depend on all the parameters in the system, except on 
$d$ (the number of POD modes retained in the Galerkin truncation), 
$N$ (the number of snapshots),
$r$ (the number of POD modes used in the POD-G model \eqref{pod_g}),
$R$ (the number of POD modes used in the projection operator in the 
VMS-POD model \eqref{vms_pod}),
$h$ (the mesh-size in the FE discretization), 
$\alpha$ (the artificial viscosity coefficient), and 
$\varepsilon$ (the diffusion coefficient).
Of particular interest is the independence of the generic constant $C$ from 
$\varepsilon$.
Indeed, we will prove estimates that are {\em uniform with respect to $\varepsilon$},
which is important when convection-dominated flows (such as the NSE) are considered.

We introduce the bilinear forms
$
b(u,v)
:= (\bb \cdot \nabla u , v)
+ (g \,u , v) , \  
a(u, v)
:= \varepsilon (\nabla u, \nabla v)
+ b(u,v) , \ 
$ 
and
$
A(u,v)
:= a(u, v)
+ \alpha \, (\prp \nabla u, \prp \nabla v) .
$
We also consider the weighted norm 
$\| u \|_{a, b, \alpha}^2
:= a \, \| u \|^2 
+ b \, \| \nabla u \|^2
+ \alpha \, \| \prp \nabla u \|^2 .
$
We now make the following assumption, which is used in proving the well-posedness
of the weak formulation of \eqref{cdcd}.
\begin{assumption}[Coercivity and Continuity]
\begin{eqnarray}
g 
- \frac{1}{2} \, \nabla \cdot \bb 
\geq \beta
> 0 
\quad \text{and} \quad
\max \{ \| g \| , \| \bb \| \} 
= \gamma > 0 .
\end{eqnarray}
\label{assumption_coercivity}
\end{assumption}
For the FE discretization of \eqref{cdcd}, we consider a family 
of finite dimensional subspaces $X^h$ of $X = H_0^1(\Omega)$, such that,
for all $v \in H^{m+1} \cap X$, the following assumption is satisfied.
\begin{assumption}[Approximability]
\begin{eqnarray}
\hspace*{0.6cm} \inf_{v_h \in X^h} 
\left\{ 
\| v - v_h \|
+ h \, \| \nabla v - \nabla v^h \|
\right\}
\leq C \, h^{m+1} \, \| v \|_{m+1}
\qquad
1 \leq m \leq k ,
\label{assumption_approximability_1}
\end{eqnarray}
\label{assumption_approximability}
\end{assumption}
\noindent where $k$ is the order of accuracy of $\{ X^h \}$.
We also assume that the finite element spaces $\{ X^h \}$ satisfy the
following inverse estimate.
\begin{assumption}[FE Inverse Estimate]
\begin{eqnarray}
&& \| \nabla v_{h} \|
\leq C \, h^{-1} \ \| v_{h} \|
\qquad \forall \, v_h \in X^{h} .
\end{eqnarray}
\label{assumption_inverse_fem}
\end{assumption}
A similar inverse estimate for POD is proven in \cite{KV01}.
For completeness, we present it below.
We also include a new estimate and present its proof.
\begin{lemma}[POD Inverse Estimate]
Let $M_{r} \in \R^{r \times r}$ with $M_{i j} = (\varphi_j , \varphi_i)$ be the POD mass matrix, 
$H_{r} \in \R^{r \times r}$ with $H_{i j} = (\nabla \varphi_j , \nabla \varphi_i)$ be the POD stiffness matrix, 
$S_{r} \in \R^{r \times r}$ with $S_{i j} = (\varphi_j , \varphi_i)_{H^1}$ be the POD mass matrix in the $H^1$-norm,
and  $\| \cdot \|_2$  denote the matrix 2-norm.
Then, for all $\vr \in X^r$, the following estimates hold.
\begin{eqnarray}
\| \vr \|_{L^2} 
&\leq& \sqrt{\| M_r \|_2 \, \| S_r^{-1} \|_2 } \, \| \vr \|_{H^1} \, ,
\label{lemma_inverse_pod_1} \\
\| \vr \|_{H^1} 
&\leq& \sqrt{\| S_r \|_2 \, \| M_r^{-1} \|_2 } \, \| \vr \|_{L^2} \, ,
\label{lemma_inverse_pod_2} \\
\| \nabla \vr \|_{L^2}  
&\leq& \sqrt{\| H_r \|_2 \, \| M_r^{-1} \|_2 } \, \| \vr \|_{L^2}  \, .
\label{lemma_inverse_pod_3}
\end{eqnarray}
\label{lemma_inverse_pod}
\end{lemma}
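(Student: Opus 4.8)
The plan is to reduce every inequality to an elementary spectral estimate for the Gram matrices $M_r$, $S_r$, $H_r$ of the POD basis. First I would fix $\vr \in X^r$ and write it in coordinates, $\vr = \sum_{j=1}^r c_j\,\varphi_j$, with coefficient vector $c = (c_1,\dots,c_r)^{\top} \in \R^r$; this representation is unique because the POD modes are linearly independent. Expanding each norm by bilinearity of the underlying inner products yields the identities
\[
\| \vr \|_{L^2}^2 = c^{\top} M_r\, c, \qquad
\| \vr \|_{H^1}^2 = c^{\top} S_r\, c, \qquad
\| \nabla \vr \|_{L^2}^2 = c^{\top} H_r\, c .
\]
Here $M_r$ and $S_r$ are Gram matrices of linearly independent functions, hence symmetric positive definite (in particular invertible), and $H_r$ is symmetric positive semidefinite.

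Next I would record two standard facts, valid for any symmetric matrix $A \in \R^{r\times r}$ and any $x \in \R^r$, using that $\|A\|_2$ equals the spectral radius of $A$: if $A$ is positive semidefinite then $x^{\top} A\, x \le \|A\|_2\,\|x\|_2^2$, and if $A$ is positive definite then $\|x\|_2^2 \le \|A^{-1}\|_2\, x^{\top} A\, x$ (diagonalize $A$, or equivalently write $x^{\top}x = (A^{1/2}x)^{\top} A^{-1}(A^{1/2}x)$). Applying the first fact to the matrix appearing in the quadratic form for the left-hand norm of each identity, and the second fact to the matrix controlling $\|c\|_2^2$, each of the three claimed bounds follows by a short chain. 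For \eqref{lemma_inverse_pod_1}: $\|\vr\|_{L^2}^2 = c^{\top}M_r c \le \|M_r\|_2\|c\|_2^2 \le \|M_r\|_2\|S_r^{-1}\|_2\,(c^{\top}S_r c) = \|M_r\|_2\|S_r^{-1}\|_2\,\|\vr\|_{H^1}^2$, and taking square roots gives the estimate. Inequality \eqref{lemma_inverse_pod_2} is the symmetric computation with the roles of $M_r$ and $S_r$ interchanged, and \eqref{lemma_inverse_pod_3} is identical with $H_r$ in place of $S_r$ in the first step.

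I do not expect any genuine obstacle: the whole argument is finite-dimensional linear algebra. The two points deserving a line of care are the invertibility of $M_r$ and $S_r$ (which is exactly linear independence of $\{\varphi_1,\dots,\varphi_r\}$) and keeping straight the direction of the spectral inequalities, so that the norms of $M_r, S_r, H_r$ --- and not of their inverses --- appear in the upper-bound step while the norm of an inverse appears in the lower-bound step. As a side remark, with the choice $X = H_0^1(\Omega)$ used in the paper the POD modes are $H^1$-orthonormal, so $S_r$ is the identity and \eqref{lemma_inverse_pod_1}--\eqref{lemma_inverse_pod_2} simplify accordingly; one could also phrase everything through the generalized eigenvalue problems for the pencils $(M_r,S_r)$ and $(H_r,M_r)$, which exhibits the constants as optimal, but the quadratic-form computation above is the most direct route.
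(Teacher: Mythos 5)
Your argument is correct and is essentially the paper's: the paper proves \eqref{lemma_inverse_pod_3} by exactly this Rayleigh-quotient bound $x^{\top}H_r x \le \|H_r\|_2\,x^{\top}x$ combined with $x^{\top}x \le \|M_r^{-1}\|_2\,x^{\top}M_r x$ (obtained there via a Cholesky factorization of $M_r$, which is the same device as your $A^{1/2}$ substitution). The only difference is that the paper simply cites Kunisch--Volkwein for \eqref{lemma_inverse_pod_1} and \eqref{lemma_inverse_pod_2}, whereas you prove all three uniformly by the same Gram-matrix computation; your closing remark that $S_r = I$ when $X = H_0^1$ matches the paper's Remark following the lemma.
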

\begin{proof}
The proof of estimates \eqref{lemma_inverse_pod_1} and \eqref{lemma_inverse_pod_2} 
was given in \cite{KV01} (see Lemma 2 and Remark 2).
The proof of \eqref{lemma_inverse_pod_3} follows along the same lines:
Let $v_r = \sum_{i=1}^{r} x_j \varphi_j$ and $\bx = (x_1, \ldots, x_r)^T$.
From the definition of $H_r$, it follows that $\| \nabla \vr \|_{L^2}^2 = \bx^T \, H_r \, \bx$.
Since $H_r$ is symmetric, its matrix 2-norm is equal to its Rayleigh quotient \cite{demmel1997applied}:
$\| H_r \|_2 = \max\limits_{\bx \neq {\bf 0}} \frac{\bx^T \, H_r \, \bx}{\bx^T \, \bx}$. 
Thus, we get:
\begin{eqnarray}
\| \nabla \vr \|_{L^2}^2 = \bx^T \, H_r \, \bx \leq \| H_r \|_2 \, \bx^T \, \bx \, .
\label{lemma_inverse_pod_3_1}
\end{eqnarray}
Furthermore, since $M_r^{-1}$ is also symmetric, we get 
$\by^T \, M_r^{-1} \, \by \leq \| M_r^{-1} \|_2 \, \by^T \, \by$ for all vectors $\by \in \R^r$.
We also note that, since $M_r$ is symmetric positive definite, we can use its Cholesky 
decomposition $M_r = L_r \, L_r^T$, where $L_r$ is a lower triangular nonsingular 
matrix \cite{demmel1997applied}.
Thus, letting $\by = L_r \, \bx$, we get:
\begin{eqnarray}
\| M_r^{-1} \|_2
\geq \frac{\by^T \, M_r^{-1} \, \by}{\by^T \, \by}
= \frac{\bx^T \, L_r^T \, (L_r^{-1})^T \, L_r^{-1} \, L_r \, \bx}{\bx^T \, L^T \, L \, \bx}
= \frac{\bx^T \, \bx}{\bx^T \, M_r \, \bx} \, .
\label{lemma_inverse_pod_3_2}
\end{eqnarray}
Inequalities \eqref{lemma_inverse_pod_3_1} and \eqref{lemma_inverse_pod_3_2} 
imply the following inequality, which proves \eqref{lemma_inverse_pod_3}:
$
\| \nabla \vr \|_{L^2}^2 
\leq \| H_r \|_2 \, \| M_r^{-1} \|_2 \, \bx^T \, M_r \, \bx
= \| H_r \|_2 \, \| M_r^{-1} \|_2 \, \| \vr \|^2_{L^2}  \, .
$
\end{proof}
\begin{remark}
We note that, in our setting, \eqref{lemma_inverse_pod_3} can be improved.
Indeed, since $S_r$ is the identity matrix when $X = H_0^1$, we get:
\begin{eqnarray}
\| \nabla \vr \|_{L^2}
\leq \| \vr \|_{H^1} 
\leq \sqrt{\| S_r \|_2 \, \| M_r^{-1} \|_2 } \, \| \vr \|_{L^2}
= \sqrt{\| M_r^{-1} \|_2 } \, \| \vr \|_{L^2} \, .
\label{remark_inverse_pod_1}
\end{eqnarray}
We note, however, that in general \eqref{remark_inverse_pod_1} might not hold.
\label{remark_inverse_pod}
\end{remark}
To prove optimal error estimates in time, we follow \cite{KV01} and include the finite difference
quotients $\opartial u(t_n) = \frac{u(t_n) - u(t_{n-1})}{\Delta t}$, where $n = 1, \ldots, N$, in the set 
of snapshots $V := \mbox{span}\left\{ u(t_0), \ldots, u(t_N), \opartial u(t_1), \ldots, \opartial u(t_N) \right\}$.
As pointed out in \cite{KV01}, the error formula \eqref{pod_error_formula} becomes:
\begin{eqnarray}
\begin{split}
& \hspace*{0.55cm} \frac{1}{2 N + 1} \, \sum_{i=0}^N 
\left\| 
u(\cdot,t_i) - \sum_{j=1}^r \bigl( u(\cdot,t_i) \, , \, \varphi_j(\cdot) \bigr)_X \, \varphi_j(\cdot) 
\right\|_X^2 \\
& + \  \frac{1}{2 N + 1} \, \sum_{i=1}^N 
\left\| 
\opartial u(\cdot,t_i) - \sum_{j=1}^r \bigl( \opartial u(\cdot,t_i) \, , \, \varphi_j(\cdot) \bigr)_X \, \varphi_j(\cdot) 
\right\|_X^2
= \sum_{j=r+1}^{d} \lambda_j \, .
\end{split}
\label{pod_error_formula_new}
\end{eqnarray}

\medskip

After these preliminaries, we are ready to derive the error estimates.

The weak form of \eqref{cdcd} reads:
\begin{eqnarray}
(u_t, v)
+ a(u,v)
= (f ,v)
\qquad \forall \, v \in X \, .
\label{cdcd_weak}
\end{eqnarray}
The VMS-POD model for \eqref{cdcd_weak} with a backward Euler time 
discretization reads: 
Find $\urn \in X^r$ such that:
\begin{eqnarray}
\frac{1}{\Delta t} \, (\urnp -\urn, \vr)
+ A(\urnp,\vr)
= (\fnp ,\vr)
\qquad \forall \, \vr \in X^r \, .
\label{vms_pod_fe}
\end{eqnarray}
The following stability result for $\urn$ holds:
\begin{theorem}
The solution $\urn$ of \eqref{vms_pod_fe} satisfies the following bound
\begin{eqnarray}
\| \urn \|
\leq \| u_r^0 \|
+ \Delta t \, \sum_{n=0}^{N-1} \| \fnp \| .
\label{theorem_stability_1}
\end{eqnarray}
\label{theorem_stability}
\end{theorem}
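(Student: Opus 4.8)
The plan is to run the standard discrete energy argument for the backward Euler scheme, choosing the test function $v_r = \urnp$ in \eqref{vms_pod_fe}. The first point to settle is that the form $A$ is nonnegative on the diagonal, so that it may simply be dropped. Writing $A(\urnp,\urnp) = \varepsilon\,\|\nabla\urnp\|^2 + b(\urnp,\urnp) + \alpha\,\|\prp\nabla\urnp\|^2$, the first and third terms are manifestly nonnegative, and for the middle one I would integrate by parts in the convection term, using that $\urnp\in X^r\subset H_0^1(\Omega)$ vanishes on $\partial\Omega$, to get $(\bb\cdot\nabla\urnp,\urnp) = -\frac{1}{2}(\nabla\cdot\bb\,\urnp,\urnp)$. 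Hence $b(\urnp,\urnp) = \bigl((g-\frac{1}{2}\nabla\cdot\bb)\urnp,\urnp\bigr) \geq \beta\,\|\urnp\|^2 \geq 0$ by Assumption~\ref{assumption_coercivity}, so $A(\urnp,\urnp)\geq 0$.

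Next I would rewrite the time-difference term as $(\urnp-\urn,\urnp) = \|\urnp\|^2 - (\urn,\urnp)$ and bound the two remaining inner products by Cauchy--Schwarz, $(\urn,\urnp)\leq\|\urn\|\,\|\urnp\|$ and $(\fnp,\urnp)\leq\|\fnp\|\,\|\urnp\|$. Combining these with the fact that $A(\urnp,\urnp)$ is nonnegative, equation \eqref{vms_pod_fe} gives $\|\urnp\|^2 \leq \bigl(\|\urn\| + \Delta t\,\|\fnp\|\bigr)\,\|\urnp\|$. If $\|\urnp\|>0$, dividing through yields the one-step bound $\|\urnp\|\leq\|\urn\|+\Delta t\,\|\fnp\|$; if $\|\urnp\|=0$ this bound holds trivially because its right-hand side is nonnegative.

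Finally, I would iterate this one-step estimate from index $0$ through $n-1$ (a telescoping sum), obtaining $\|\urn\|\leq\|u_r^0\| + \Delta t\sum_{k=0}^{n-1}\|f^{k+1}\|$, and then enlarge the sum to run up to $N-1$ since all summands are nonnegative. This establishes \eqref{theorem_stability_1} for every $n$.

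I do not anticipate a real obstacle here. The only points needing a little care are the integration-by-parts step, which requires $\bb$ to be (weakly) divergence-integrable and $\urnp$ to vanish on $\partial\Omega$ (both available in the present setting), and the division by $\|\urnp\|$, which must be paired with the trivial remark covering the degenerate case $\urnp=0$. Note that the POD inverse estimates of Lemma~\ref{lemma_inverse_pod} are not used for this stability bound; they will enter only in the subsequent error analysis.
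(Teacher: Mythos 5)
Your proposal is correct and follows essentially the same route as the paper: test with $v_r=\urnp$, discard $A(\urnp,\urnp)\geq 0$ (which the paper leaves implicit but which you rightly justify via the skew-symmetrization of the convection term and Assumption~\ref{assumption_coercivity}), apply Cauchy--Schwarz, cancel $\|\urnp\|$, and telescope. The only differences are matters of exposition --- you spell out the coercivity step and the degenerate case $\urnp=0$, which the paper's two-line proof glosses over.
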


\begin{proof}
Choosing $\vr := \urnp$ in \eqref{vms_pod_fe}, we get:
\begin{eqnarray}
\frac{1}{\Delta t} \, (\urnp -\urn, \urnp)
+ A(\urnp, \urnp)
= (\fnp , \urnp) \, .
\label{theorem_stability_2}
\end{eqnarray}
By applying the Cauchy-Schwarz inequality on both sides of \eqref{theorem_stability_2}
and simplifying by $\| \urnp \|$, we get:
\begin{eqnarray}
\| \urnp \|
- \| \urn \|
\leq \Delta t \, \| \fnp \| .
\label{theorem_stability_3}
\end{eqnarray}
Summing from $0$ to $N-1$ the inequality in \eqref{theorem_stability_3}, we get 
\eqref{theorem_stability_1}.
\end{proof}

In order to prove an estimate for $\| u^n - \urn \|$, 
we will first consider the {\it Ritz projection} $w_r \in X^r$ of $u \in X$:
\begin{eqnarray}
A(u - w_r , \vr)
= 0
\qquad \forall \, \vr \in X^r .
\label{ritz}
\end{eqnarray}
The existence and uniqueness of $w_r$ follow from Lax-Milgram lemma.
We now prove an estimate for $\un - \wrn$, the error in the Ritz projection.

\begin{lemma}
The Ritz projection $\wrn$ of $\un$ satisfies the following error estimate:
\begin{eqnarray}
\hspace*{1.0cm} \frac{1}{N} \, \sum_{n=1}^{N} \| \un - \wrn \|
&\leq& C \, 
\Biggl\{
\left(
1
+ {\sqrt{\|M_r^{-1}\|_2}}
+ \alpha^{-1}
\right)^{1/2}  \label{lemma_ritz_0} \\
&& \hspace*{1.0cm} \left(
h^{m+1} \, \frac{1}{N} \, \sum_{n=1}^{N} \| \un \|_{m+1}
+ \sqrt{\sum_{j = r+1}^{d} \lambda_j}
\right)  \nonumber \\
&& \hspace*{0.1cm} +  \sqrt{\varepsilon + \alpha} \, 
\left(
h^{m} \, \frac{1}{N} \, \sum_{n=1}^{N} \| \un \|_{m+1}
+ \sqrt{\sum_{j = r+1}^{d} \lambda_j}
\right)
\Biggr\}  . \nonumber
\end{eqnarray}
\label{lemma_ritz}
\end{lemma}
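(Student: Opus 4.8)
The plan is to adapt the standard Ritz-projection argument (cf.\ \cite{heitmann2007subgridscale,kaya2004numerical}) to the VMS-POD setting, inserting the POD-specific estimates where they are needed. I would first split the error through an intermediate element of $X^r$: write $\un - \wrn = \etan + \prn$ with $\etan := \un - \Lambda_r \un$ and $\prn := \Lambda_r \un - \wrn \in X^r$, where $\Lambda_r \un \in X^r$ is a convenient approximation of $\un$ whose accuracy is recorded in the last step. Since $\prn \in X^r$, the definition \eqref{ritz} of the Ritz projection gives $A(\etan + \prn, \vr) = 0$ for all $\vr \in X^r$; choosing $\vr = \prn$ and using the coercivity $A(v,v) \ge \| v \|_{\beta,\varepsilon,\alpha}^2$ (which follows from Assumption \ref{assumption_coercivity} after integrating $(\bb \cdot \nabla v, v)$ by parts, so that the weighted norm here has $a = \beta$, $b = \varepsilon$) yields $\| \prn \|_{\beta,\varepsilon,\alpha}^2 \le A(\prn, \prn) = -A(\etan, \prn) \le |A(\etan, \prn)|$. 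The task thus reduces to bounding $|A(\etan, \prn)|$ in terms of $\etan$ and $\| \prn \|_{\beta,\varepsilon,\alpha}$, and then estimating $\etan$.

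For the first task I would expand $A(\etan, \prn) = \varepsilon (\nabla \etan, \nabla \prn) + (\bb \cdot \nabla \etan, \prn) + (g \etan, \prn) + \alpha (\prp \nabla \etan, \prp \nabla \prn)$ and bound each term by Cauchy--Schwarz, exploiting that $\varepsilon^{1/2} \| \nabla \prn \|$, $\alpha^{1/2} \| \prp \nabla \prn \|$ and $\beta^{1/2} \| \prn \|$ are all dominated by $\| \prn \|_{\beta,\varepsilon,\alpha}$. The diffusion and artificial-viscosity terms then contribute the factor $(\varepsilon + \alpha)^{1/2} \| \nabla \etan \|$ (plus a small multiple of $\| \prn \|_{\beta,\varepsilon,\alpha}^2$), and the reaction term contributes $C \| \etan \| \, \| \prn \|_{\beta,\varepsilon,\alpha}$. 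The convective term is the delicate one: a direct bound $(\bb \cdot \nabla \etan, \prn) \le \gamma \| \nabla \etan \| \, \| \prn \|$ would put an $\varepsilon$-independent $\| \nabla \etan \|$ contribution into the estimate \emph{without} the $\sqrt{\varepsilon + \alpha}$ weight, so instead I would integrate by parts, $(\bb \cdot \nabla \etan, \prn) = -((\nabla \cdot \bb)\etan, \prn) - (\etan, \bb \cdot \nabla \prn)$, absorb the first piece into the $C \| \etan \| \, \| \prn \|_{\beta,\varepsilon,\alpha}$ group, and for the second piece split $\nabla \prn = \pr \nabla \prn + \prp \nabla \prn$, bounding $\| \pr \nabla \prn \| \le \| \nabla \prn \| \le \sqrt{\MrI}\, \| \prn \|$ by the POD inverse estimate \eqref{remark_inverse_pod_1} and $\| \prp \nabla \prn \| \le \alpha^{-1/2} \| \prn \|_{\beta,\varepsilon,\alpha}$ by the artificial-viscosity control. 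This is exactly where the coefficients $\sqrt{\MrI}$ and $\alpha^{-1}$ in the stated bound originate. Gathering terms, dividing by $\| \prn \|_{\beta,\varepsilon,\alpha}$, and using Young's inequality to absorb the leftover $\| \prn \|_{\beta,\varepsilon,\alpha}$ back to the left-hand side yields a pointwise-in-$n$ bound $\| \un - \wrn \| \le C ( 1 + \sqrt{\MrI} + \alpha^{-1} )^{1/2} \| \etan \| + C (\varepsilon + \alpha)^{1/2} \| \nabla \etan \|$ (using also $\| \un - \wrn \| \le \| \etan \| + \beta^{-1/2} \| \prn \|_{\beta,\varepsilon,\alpha}$ and $1 \le ( 1 + \sqrt{\MrI} + \alpha^{-1} )^{1/2}$).

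For the second task I would estimate the interpolation error $\etan$ by letting $\Lambda_r$ combine a best finite element approximation onto $X^h$ with the $X = H_0^1$-orthogonal (POD) projection onto $X^r$, so that the ``distance'' from $\un$ to $X^r$ splits into a finite element part and a POD truncation part. Assumption \ref{assumption_approximability} supplies the $h^{m+1} \| \un \|_{m+1}$ and $h^{m} \| \un \|_{m+1}$ contributions to $\| \etan \|$ and $\| \nabla \etan \|$, respectively, while the POD error formula \eqref{pod_error_formula_new} together with the $L^2$- and $H^1$-stability of the POD projection supplies the $\PODr$ contribution to each; averaging over $n = 1, \dots, N$ gives $\frac1N \sum_{n=1}^N \| \etan \| \le C ( h^{m+1} \frac1N \sum_{n=1}^N \| \un \|_{m+1} + \PODr )$ and $\frac1N \sum_{n=1}^N \| \nabla \etan \| \le C ( h^{m} \frac1N \sum_{n=1}^N \| \un \|_{m+1} + \PODr )$. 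Substituting these into the (averaged) pointwise bound of the previous paragraph produces \eqref{lemma_ritz_0}.

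The step I expect to be the main obstacle is keeping \emph{every} constant independent of $\varepsilon$ while handling the convective term: this is why $(\bb \cdot \nabla \etan, \prn)$ must be integrated by parts, and why the factor multiplying $\| \nabla \prn \|$ has to come from the POD inverse estimate \eqref{remark_inverse_pod_1} and from $\alpha^{1/2} \| \prp \nabla \prn \| \le \| \prn \|_{\beta,\varepsilon,\alpha}$ rather than from the crude $\varepsilon^{1/2} \| \nabla \prn \| \le \| \prn \|_{\beta,\varepsilon,\alpha}$. Getting the coefficient in the precise form $(1 + \sqrt{\MrI} + \alpha^{-1})^{1/2}$ (rather than a cruder power of $\MrI$) will require choosing the Young parameters carefully and, possibly, a more refined treatment of the $\pr \nabla \prn$ piece; the remainder is routine bookkeeping of which term carries the $(\varepsilon + \alpha)^{1/2}$ weight and which carries the $( 1 + \sqrt{\MrI} + \alpha^{-1} )^{1/2}$ weight.
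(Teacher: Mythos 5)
Your proposal follows essentially the same route as the paper's proof: the same splitting through an intermediate $X^r$-approximant, the same use of the Ritz equation with test function $\prn$ and coercivity of the (symmetric part of the) form, the same integration by parts of the convective term followed by the $\pr/\prp$ splitting of $\nabla\prn$ with the POD inverse estimate \eqref{remark_inverse_pod_1} controlling the $\pr$ piece and the artificial viscosity controlling the $\prp$ piece, and the same two-stage (finite element plus POD truncation) estimate of $\etan$. The one point you flag as delicate --- obtaining $\sqrt{\MrI}$ rather than $\MrI$ in the prefactor --- is in fact not resolved in the paper either: the Young-inequality step there produces $\left(1+\MrI+\alpha^{-1}\right)^{1/2}$ in \eqref{lemma_ritz_14_e}, which differs from the $\sqrt{\MrI}$ appearing in the lemma statement.
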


\begin{proof}
Setting $u := \un$ in \eqref{ritz}, we get:
\begin{eqnarray}
A(\un - \wrn , \vr)
= 0
\qquad \forall \, \vr \in X^r .
\label{lemma_ritz_1}
\end{eqnarray}
We decompose the error $\un - \wrn$ as
$
\un - \wrn
:= \left( \un - I_{h, r}(\un) \right)
- \left( \wrn - I_{h, r}(\un) \right)
= \etan - \prn ,
$
where $I_{h, r}(u^n)$ is the interpolant of $u^n$ in the space $X^r$.
By the triangle inequality, we have:
\begin{eqnarray}
\frac{1}{N} \, \sum_{n=1}^{N} \| \un - \wrn \|
\leq \frac{1}{N} \, \sum_{n=1}^{N} \| \etan \|
+ \frac{1}{N} \, \sum_{n=1}^{N} \| \prn \|.
\label{lemma_ritz_3}
\end{eqnarray}
We start by estimating $\| \etan \|$.
We note that $I_{h, r}(\un)$ consists of two parts:
We first consider $u_h^n$, the FE solution of \eqref{cdcd}, which yielded the
ensemble of snapshots $V$ defined in \eqref{snapshots}.
Then, we interpolate $u_h^n$ in $X^r$, which yields $I_{h, r}(u^n)$.
Note that this is different from \cite{heitmann2007subgridscale}, where
only the first part was present (see $(8)$ in \cite{heitmann2007subgridscale}).
\begin{eqnarray}
\begin{split}
\hspace*{1.0cm} \frac{1}{N} \, \sum_{n=1}^{N} \| \etan \|
&= \frac{1}{N} \, \sum_{n=1}^{N} \| \un - I_{h, r}(\un) \|  \\
&\leq \frac{1}{N} \, \sum_{n=1}^{N} \| \un - u_h^n \|
 + \frac{1}{N} \, \sum_{n=1}^{N} \| u_h^n - I_{h, r}(\un) \| \, .
\label{lemma_ritz_4}
\end{split}
\end{eqnarray}
Using Assumption \ref{assumption_approximability}, it is easily 
shown \cite{quarteroni1994numerical} that: 
\begin{eqnarray}
\frac{1}{N} \, \sum_{n=1}^{N} \| \un - u_h^n \|
\leq C \, h^{m+1} \, \frac{1}{N} \, \sum_{n=1}^{N} \| \un \|_{m+1 }.
\label{lemma_ritz_4b}
\end{eqnarray}
Picking $\displaystyle I_{h, r}(\un) := \sum_{j=1}^{r} (u^n , \varphi_j)_X \, \varphi_j$
in the last term on the RHS of \eqref{lemma_ritz_4} and then using 
\eqref{pod_error_formula_new}, we get:
\begin{eqnarray}
\frac{1}{N} \, \sum_{n=1}^{N} \| u_h^n - I_{h, r}(\un) \| 
\leq \sqrt{\sum_{j = r+1}^{d} \lambda_j} .
\label{lemma_ritz_4c}
\end{eqnarray}
Note that we consider that the time instances $t_n = n \, \Delta t$ in the time discretization
\eqref{vms_pod_fe} are the same as the time instances at which the snapshots were taken.
If this is not the case, one should use a Taylor series approach (see $(4.8)$ in \cite{luo2009finite}).

Plugging \eqref{lemma_ritz_4b} and \eqref{lemma_ritz_4c} in \eqref{lemma_ritz_4}, 
we get: 
\begin{eqnarray}
\frac{1}{N} \, \sum_{n=1}^{N} \| \etan \|
\leq C \, h^{m+1} \, \frac{1}{N} \, \sum_{n=1}^{N} \| \un \|_{m+1 }
+ \sqrt{\sum_{j = r+1}^{d} \lambda_j} .
\label{lemma_ritz_4cd}
\end{eqnarray}
Similarly, using that $X = H_0^1$ in \eqref{pod_error_formula_new}, we get:
\begin{eqnarray}
\frac{1}{N} \, \sum_{n=1}^{N} \| \nabla \etan \|
\leq C \, h^{m} \, \frac{1}{N} \, \sum_{n=1}^{N} \| \un \|_{m+1 }
+ \sqrt{\sum_{j = r+1}^{d} \lambda_j} .
\label{lemma_ritz_4d}
\end{eqnarray}
Equation \eqref{lemma_ritz_1} implies:
\begin{eqnarray}
A(\un - \wrn, \vr)
= A(\etan - \prn, \vr)
= 0 .
\label{lemma_ritz_5}
\end{eqnarray}
Choosing $\vr = \prn$ in \eqref{lemma_ritz_5} implies:
\begin{eqnarray}
A(\prn, \prn)
= A(\etan, \prn) .
\label{lemma_ritz_6}
\end{eqnarray}
We decompose the bilinear form $A$ into its symmetric and skew-symmetric parts:
$A := A_{s} + A_{ss}$, 
where 
$A_{s}(u, v) 
:= \alpha \, (\prp \nabla u , \prp \nabla v)
+ \varepsilon \, (\nabla u, \nabla v)
+ \left( \left( g - \frac{1}{2} \nabla \cdot \bb \right) \, u , v \right)$, and
$A_{ss}(u, v)
:=  \left( \bb \cdot \nabla u + \newline \frac{1}{2} \left( \nabla \cdot \bb \right) \, u , v \right) \, $.
Equation \eqref{lemma_ritz_6} implies:
\begin{eqnarray}
A_{s}(\prn, \prn)
+ \cancelto{0}{A_{ss}(\prn, \prn)}
= A_{s}(\etan, \prn)
+ A_{ss}(\etan, \prn) \, .
\label{lemma_ritz_8}
\end{eqnarray}
Assumption \ref{assumption_coercivity} implies that
$A_{s}(\prn, \prn) \geq C \, \| \prn \|_{1,\varepsilon,\alpha}^2$.
Thus, using the Cauchy-Schwarz and Young's inequalities, 
\eqref{lemma_ritz_8} becomes:
\begin{eqnarray}
\begin{split}
\hspace*{1.0cm} & C \, \| \prn \|_{1,\varepsilon,\alpha}^2
\leq A_{s}(\prn, \prn)^{1/2} \, A_{s}(\etan , \etan)^{1/2}
+ A_{ss}(\etan, \prn) \\
& \leq \frac{1}{2} \, A_{s}(\prn, \prn) 
+ \frac{1}{2} \, A_{s}(\etan , \etan) 
+ (\bb \etan , \nabla \prn)
+ \frac{1}{2} \, \left( \left( \nabla \cdot \bb \right) \etan , \prn \right) \, .
\label{lemma_ritz_9}
\end{split}
\end{eqnarray}
Rearranging and using Assumption \ref{assumption_coercivity}, \eqref{lemma_ritz_9} becomes:
\begin{eqnarray}
\hspace*{1.0cm} C \, \| \prn \|_{1, \varepsilon, \alpha}^2
\leq C \, 
\biggl(
|A_{s}(\etan, \etan)|
+ |(\bb \etan , \nabla \prn)|
+ |\left( \left( \nabla \cdot \bb \right) \etan , \prn \right)|
\biggr) \, .
\label{lemma_ritz_10}
\end{eqnarray}
We now estimate each term on the RHS of \eqref{lemma_ritz_10}.
\begin{eqnarray}
\begin{split}
|A_{s}(\etan, \etan)|
& = \varepsilon \, \| \nabla \etan \|^2
+ \left( \left( g - \frac{1}{2} \nabla \cdot \bb \right) \, \etan , \etan \right) \\
& + \alpha \, \| \prp \nabla \etan\|^2
\leq C \, \| \etan \|_{1, \varepsilon, \alpha}^2 \, .
\label{lemma_ritz_11}
\end{split}
\end{eqnarray}
To estimate the second term on the RHS of \eqref{lemma_ritz_10}, we first 
note that $\| \pr \| \leq 1$ (since $\pr$ is $L^2$-projection) and use the inverse 
estimate \eqref{lemma_inverse_pod_2} in Lemma \ref{lemma_inverse_pod} to
obtain:
\begin{eqnarray}
\| \pr (\nabla \prn) \|
\leq \| \nabla \prn \|
\leq \| \prn \|_{H^1}
\leq \sqrt{\| M_r^{-1} \|_2} \, \| \prn \| \, .
\label{lemma_ritz_11.5}
\end{eqnarray}
Using that $(\pr u , \prp v) = 0 \ \forall \, u, v$, the Cauchy-Schwarz
and Young's inequalities, and the inverse estimate \eqref{remark_inverse_pod_1}, 
we then get:
\begin{eqnarray}
\hspace*{1.0cm} |(\bb \etan , \nabla \prn)| 
&\leq& |( \pr (\bb \etan) , \pr (\nabla \prn) )|
+ |( \prp (\bb \etan) , \prp (\nabla \prn) )| \label{lemma_ritz_12} \\
&\leq&  \| \pr (\bb \etan) \| \, \| \pr (\nabla \prn) \|
+ \| \prp (\bb \etan) \| \, \| \prp (\nabla \prn) \| \nonumber \\
&\leq&  C \, \sqrt{\| M_r^{-1} \|_2} \, \| \pr (\bb \etan) \| \, \| \prn \|
+ \| \prp (\bb \etan) \| \, \| \prp (\nabla \prn) \| \nonumber \\
&\leq&  
\left( 
\frac{1}{\beta} \, C \, \| M_r^{-1} \|_2 \, \| \pr (\bb \etan) \|^2
+ \frac{\beta}{4} \, \| \prn \|^2
\right) \nonumber \\
&+& 
\left( 
\frac{1}{2 \, \alpha} \,  \| \prp (\bb \etan) \|^2
+ \frac{\alpha}{2} \, \| \prp (\nabla \prn) \|^2
\right). \nonumber
\end{eqnarray}
We note that this is exactly why we need the inverse estimate in 
in Lemma \ref{lemma_inverse_pod}: to absorb $\| \prn \|^2$ in the LHS 
of \eqref{lemma_ritz_10}.
If we had used $\| \nabla \prn \|^2$ instead, then we would have had to absorb it
in $\varepsilon \, \| \nabla \prn \|^2$ on the LHS, and so the RHS would have
depended on $\varepsilon$.
Finally, by using the Cauchy-Schwarz and Young's inequalities, the third
term on the RHS of \eqref{lemma_ritz_10} can be estimated as follows:
\begin{eqnarray}
|\left( \left( \nabla \cdot \bb \right) \etan , \prn \right)|
\leq C \, \| \etan \| \, \| \prn \|
\leq C \, 
\left(
\frac{1}{\beta} \, \| \etan \|^2 
+\frac{\beta}{4} \, \| \prn \|^2
\right) \, .
\label{lemma_ritz_13}
\end{eqnarray}
Collecting estimates 
\eqref{lemma_ritz_10},
\eqref{lemma_ritz_11},
\eqref{lemma_ritz_12} and
\eqref{lemma_ritz_13},
we get:
\begin{eqnarray}
\begin{split}
\| \prn \|_{1, \varepsilon, \alpha}^2
\leq C \biggl(
\| \etan \|_{1, \varepsilon, \alpha}^2
& + \frac{1}{\beta} \, \| M_r^{-1} \|_2 \, \| \pr (\bb \etan) \|^2  \\
& + \frac{1}{2 \, \alpha} \,  \| \prp (\bb \etan) \|^2
+ \frac{1}{\beta} \, \| \etan \|^2 
\biggr) \, .
\label{lemma_ritz_14}
\end{split}
\end{eqnarray}
The last term on the RHS of \eqref{lemma_ritz_14}, can be absorbed in 
$C \, \| \etan \|_{1, \varepsilon, \alpha}^2$.
Since $\| \pr \| \leq 1$ ($\pr$ is $L^2$-projection) and $\| \bb \| \leq \gamma$
(by Assumption \ref{assumption_coercivity}), we get:
\begin{eqnarray}
\frac{1}{\beta} \, \| M_r^{-1} \|_2 \, \| \pr (\bb \etan) \|^2
\leq C \, \| M_r^{-1} \|_2 \, \| \etan \|^2 \, .
\label{lemma_ritz_14_a}
\end{eqnarray}
Since $\| \pr \| \leq 1$ ($\pr$ is $L^2$-projection) and $\| \bb \| \leq \gamma$
(by Assumption \ref{assumption_coercivity}), we get:
\begin{eqnarray}
\frac{1}{2 \, \alpha} \,  \| \prp (\bb \etan) \|^2
\leq \frac{C}{\alpha} \,  \| \etan \|^2 \, .
\label{lemma_ritz_14_b}
\end{eqnarray}
Thus, using \eqref{lemma_ritz_14_a} and \eqref{lemma_ritz_14_b} in 
\eqref{lemma_ritz_14}, we get:
\begin{eqnarray}
\begin{split}
\hspace*{1.0cm} \| \prn \|_{1, \varepsilon, \alpha}^2
\leq C \biggl(
\| \etan \|^2
+ \varepsilon \, \| \nabla \etan \|^2
& + \alpha \, \| \prp (\nabla \etan) \|^2
+ C \, \| M_r^{-1} \|_2 \, \| \etan \|^2 \\ 
& + \frac{1}{2 \, \alpha} \,  \| \prp (\bb \etan) \|^2
+ \frac{C}{\alpha} \,  \| \etan \|^2 
\biggr) \, .
\label{lemma_ritz_14_c}
\end{split}
\end{eqnarray}
Since $\pr$ is $L^2$-projection, $\| \prp \| \leq 1$, and thus the second
term on the RHS of \eqref{lemma_ritz_14_c} can be bounded as follows:
$\alpha \, \| \prp (\nabla \etan) \|^2 \leq \alpha \, \| \nabla \etan \|^2$.
Summing in \eqref{lemma_ritz_14_c}, we get:
\begin{eqnarray}
\begin{split}
\hspace*{1.0cm} \frac{1}{N} \, \sum_{n=1}^{N} \| \prn \|_{1, \varepsilon, \alpha}^2
& \leq C \, 
\left(
1
+ \|M_r^{-1}\|_2
+ \alpha^{-1}
\right) \,
\frac{1}{N} \, \sum_{n=1}^{N} \| \etan \|^2 \\
& + (\varepsilon + \alpha) \, 
\frac{1}{N} \, \sum_{n=1}^{N} \| \nabla \etan \|^2 \, .
\label{lemma_ritz_14_d}
\end{split}
\end{eqnarray}
Using \eqref{lemma_ritz_4cd} and \eqref{lemma_ritz_4d} in
\eqref{lemma_ritz_14_d}, we get:
\begin{eqnarray}
\hspace*{1.2cm} \frac{1}{N} \, \sum_{n=1}^{N} \| \prn \|
&\leq& C \, 
\Biggl\{
\left(
1
+ {\|M_r^{-1}\|_2}
+ \alpha^{-1}
\right)^{1/2}  \label{lemma_ritz_14_e} \\
&& \hspace*{1.0cm} 
\left(
h^{m+1} \, \frac{1}{N} \, \sum_{n=1}^{N} \| \un \|_{m+1}
+ \sqrt{\sum_{j = r+1}^{d} \lambda_j}
\right) \nonumber \\
&& \hspace*{1.0cm}
+ \sqrt{\varepsilon + \alpha} \, 
\left(
h^{m} \, \frac{1}{N} \, \sum_{n=1}^{N} \| \un \|_{m+1}
+ \sqrt{\sum_{j = r+1}^{d} \lambda_j}
\right)
\Biggr\}  . \nonumber
\end{eqnarray}
Using \eqref{lemma_ritz_3}, \eqref{lemma_ritz_4cd}, and \eqref{lemma_ritz_14_e}, we get 
\eqref{lemma_ritz_0}.
\end{proof}
\begin{corollary}
The Ritz projection $\wrn$ of $\un$ satisfies the following error estimate:
\begin{eqnarray}
\hspace*{1.0cm} \| (\un - \wrn)_t \|
&\leq& C \, 
\Biggl\{
\left(
1
+ {\|M_r^{-1}\|_2}
+ \alpha^{-1}
\right)^{1/2}  \\
&& \hspace*{1.0cm}
\left(
h^{m+1}  \| u_t \|_{L^2(H^{m+1})}
+ \sqrt{\sum_{j = r+1}^{d} \lambda_j}
\right)  \nonumber \\
&& \hspace*{1.0cm}
+ \sqrt{\varepsilon + \alpha} \, 
\left(
h^{m} \, \| u_t \|_{L^2(H^{m+1})}
+ \sqrt{\sum_{j = r+1}^{d} \lambda_j}
\right)
\Biggr\}  \nonumber .
\label{corollary_ritz_t_0}
\end{eqnarray}
\label{corollary_ritz_t}
\end{corollary}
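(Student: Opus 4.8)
The crucial observation is that the bilinear form $A$ in \eqref{ritz} is independent of $t$. Differentiating the defining relation $A(u - w_r, \vr) = 0$ with respect to $t$ therefore yields $A(u_t - (w_r)_t, \vr) = 0$ for all $\vr \in X^r$, so that $(w_r)_t$ is precisely the Ritz projection of $u_t$. The plan is to repeat, essentially line by line, the argument used to prove Lemma \ref{lemma_ritz}, with $u$ (resp.\ $\un$) replaced by $u_t$, and with the discrete time average replaced by the $L^2(0,T)$-in-time norm.

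First I would decompose $(u - w_r)_t = \widetilde\eta - \widetilde\phi_r$, where $\widetilde\eta := u_t - I_{h,r}(u_t)$ and $\widetilde\phi_r := (w_r)_t - I_{h,r}(u_t)$, with the interpolant $I_{h,r}$ constructed in the same two steps as in the proof of Lemma \ref{lemma_ritz}: one first passes to the time derivative of the finite element solution, whose approximation error is controlled by Assumption \ref{assumption_approximability} applied to $u_t \in H^{m+1}$, producing the $h^{m+1} \| u_t \|_{L^2(H^{m+1})}$ and $h^{m} \| u_t \|_{L^2(H^{m+1})}$ contributions; one then interpolates in $X^r$ via $I_{h,r}(u_t) = \sum_{j=1}^{r} (u_t, \varphi_j)_X \, \varphi_j$. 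For the POD part of the interpolation error I would invoke the error formula \eqref{pod_error_formula_new}: this is exactly why the difference quotients $\opartial u(t_n)$ were included in the snapshot set, since the second sum in \eqref{pod_error_formula_new} bounds $\| \opartial u(t_n) - I_{h,r}(\opartial u(t_n)) \|_X$ by $\sqrt{\sum_{j = r+1}^{d} \lambda_j}$ up to the harmless factor $\sqrt{(2N+1)/N}$; passing from the difference quotients $\opartial u(t_n)$ to the pointwise values $u_t(t_n)$, and from the discrete average to the $L^2(0,T)$ norm, is handled by a Taylor expansion, as in $(4.8)$ of \cite{luo2009finite}. This yields the analogues of \eqref{lemma_ritz_4cd} and \eqref{lemma_ritz_4d}, i.e., bounds on $\| \widetilde\eta \|$ and $\| \nabla \widetilde\eta \|$ of the form $C \bigl( h^{m+1} \| u_t \|_{L^2(H^{m+1})} + \sqrt{\sum_{j=r+1}^{d} \lambda_j} \bigr)$ and $C \bigl( h^{m} \| u_t \|_{L^2(H^{m+1})} + \sqrt{\sum_{j=r+1}^{d} \lambda_j} \bigr)$.

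Next I would run the energy estimate for $\widetilde\phi_r$: choosing $\vr = \widetilde\phi_r$ in the differentiated Ritz relation, decomposing $A = A_s + A_{ss}$, using the coercivity of $A_s$ from Assumption \ref{assumption_coercivity} together with the Cauchy--Schwarz and Young inequalities, and --- exactly as in the estimate following \eqref{lemma_ritz_12} --- using the POD inverse estimate \eqref{remark_inverse_pod_1} (equivalently \eqref{lemma_inverse_pod_2}) to absorb $\| \widetilde\phi_r \|^2$, rather than $\| \nabla \widetilde\phi_r \|^2$, into the left-hand side. This last point is precisely what keeps the generic constant independent of $\varepsilon$. The outcome is the analogue of \eqref{lemma_ritz_14_d},
\begin{equation}
\| \widetilde\phi_r \|_{1, \varepsilon, \alpha}^2
\leq C \left( 1 + \|M_r^{-1}\|_2 + \alpha^{-1} \right) \| \widetilde\eta \|^2
+ (\varepsilon + \alpha) \, \| \nabla \widetilde\eta \|^2 ,
\end{equation}
understood in the $L^2(0,T)$-in-time sense. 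Combining this with the bounds on $\| \widetilde\eta \|$ and $\| \nabla \widetilde\eta \|$, the triangle inequality $\| (u - w_r)_t \| \leq \| \widetilde\eta \| + \| \widetilde\phi_r \|$, and taking square roots gives the claimed estimate.

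The only genuine departure from the proof of Lemma \ref{lemma_ritz}, and hence the main point requiring care, is the POD interpolation error of the time derivative: one must remember that it is $\opartial u(t_n)$, and not $u_t(t_n)$, that actually sits in the augmented snapshot set, so the Taylor argument is what transfers the $\sqrt{\sum_{j=r+1}^{d} \lambda_j}$ bound; everything else is a transcription of the proof of Lemma \ref{lemma_ritz}.
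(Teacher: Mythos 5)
Your proposal is correct and follows essentially the same route as the paper: the paper's proof likewise notes that the argument of Lemma \ref{lemma_ritz} carries over verbatim with $\un - \wrn$ replaced by $(\un - \wrn)_t = \eta_t^n - (\phi_t)_r^n$, estimates $\eta_t^n$ via Assumption \ref{assumption_approximability} and the augmented error formula \eqref{pod_error_formula_new}, and relies on the inclusion of the difference quotients $\opartial u(t_n)$ in the snapshot set. If anything, you supply more detail than the paper does --- in particular the explicit observation that differentiating the time-independent form $A$ shows $(w_r)_t$ is the Ritz projection of $u_t$, and the Taylor-expansion step linking $\opartial u(t_n)$ to $u_t(t_n)$, both of which the paper leaves implicit.
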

\begin{proof}
The proof follows along the same lines as the proof of Lemma \ref{lemma_ritz}, 
with the error $\un - \wrn$ replaced by $(\un - \wrn)_t = (\eta_n - \prn)_t$.
Note that it is exactly at this point that we use the fact that the finite difference 
quotients $\opartial u(t_n)$ are included in the set of snapshots (see also Remark 1
in \cite{KV01}).
Indeed, as in the proof of Lemma \ref{lemma_ritz}, the error $(\un - \wrn)_t$ is split
into two parts:
$\displaystyle
(\un - \wrn)_t
:= \left( \un_t - I_{h, r}(\un_t) \right)
- \left( (w_t)_r^n - I_{h, r}(\un_t) \right)
= \eta_t^n - (\phi_t)_r^n .
$
As in \eqref{lemma_ritz_4}--\eqref{lemma_ritz_4cd}, $\eta_t^n$ can be estimated as
follows.
\begin{eqnarray}
\begin{split}
\| \eta_t^n \|
& \leq \| u_t^n - u_{h , t}^n \|
+ \| u_{h , t}^n - I_{h, r}(\un_t) \| \\
& \leq C \, \left(
h^{m+1}  \| u_t \|_{m+1}
+ \sqrt{\sum_{j = r+1}^{d} \lambda_j}
\right) \, ,
\label{corollary_ritz_t_1}
\end{split}
\end{eqnarray}
where in the last inequality in \eqref{corollary_ritz_t_1} we used \eqref{pod_error_formula_new}.
\end{proof}

We are now ready to prove the main result of this section.

\begin{theorem}
Assume that
\begin{eqnarray}
L^R = \nabla X^R = \text{span} \{ \nabla \varphi_1, \ldots, \nabla \varphi_R \} \, .
\label{theorem_error_0}
\end{eqnarray}
Then the following error estimate holds:
\begin{eqnarray}
&& \hspace*{0.5cm} \frac{1}{N+1} \, \sum_{n=0}^{N} \| \un - \urn \| 
\leq C \, 
\Biggl\{
\left(
1
+ \|M_r^{-1}\|_2
+ \alpha^{-1}
\right)^{1/2} \label{theorem_error_1} \\
&& \hspace*{1.0cm} \left(
h^{m+1} \, 
\frac{1}{N} \, \sum_{n=1}^{N} \left( \| \un \|_{m+1} + \| u_t \|_{L^2(H^{m+1})} \right)
+ \sqrt{\sum_{j = r+1}^{d} \lambda_j}
\right) \nonumber \\
&& \hspace*{1.0cm}
+ \sqrt{\varepsilon + \alpha} \, 
\left(
h^{m} \, \frac{1}{N} \, \sum_{n=1}^{N} \left( \| \un \|_{m+1} + \| u_t \|_{L^2(H^{m+1})} \right)
+ \sqrt{\sum_{j = r+1}^{d} \lambda_j}
\right)
\nonumber \\
&& \hspace*{1.0cm}
+ \| u^0 - u_r^0 \|
+ \Delta t \, \| u_{tt} \|_{L^2(L^2)} \nonumber \\
&& \hspace*{1.0cm}
+ \sqrt{\alpha} \, 
\left(
h^{m} \, \frac{1}{N} \, \sum_{n=1}^{N} \| u^n \|_{m+1}
+ \sqrt{\sum_{j = R+1}^{d} \lambda_j}
\right) \, 
\Biggr\} . \nonumber
\end{eqnarray}
\label{theorem_error}
\end{theorem}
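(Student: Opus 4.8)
The plan is to use the by-now-standard Ritz-projection splitting of \cite{heitmann2007subgridscale,kaya2004numerical}, feeding in the POD-specific ingredients from Lemma~\ref{lemma_ritz} and Corollary~\ref{corollary_ritz_t}. First write $\un - \urn = (\un - \wrn) - (\urn - \wrn) =: e^n - \psi^n$, where $\wrn$ is the Ritz projection \eqref{ritz}, $e^n := \un - \wrn$, and $\psi^n := \urn - \wrn \in X^r$. By the triangle inequality,
\[
\frac{1}{N+1}\sum_{n=0}^{N}\|\un-\urn\|
\;\le\; \frac{1}{N+1}\sum_{n=0}^{N}\|e^n\|
\;+\; \frac{1}{N+1}\sum_{n=0}^{N}\|\psi^n\| ,
\]
and the first sum is bounded by Lemma~\ref{lemma_ritz}, which already accounts for the $h^{m+1}$-, the $\sqrt{\varepsilon+\alpha}$-, and the $\sqrt{\sum_{j=r+1}^{d}\lambda_j}$-type contributions in \eqref{theorem_error_1}. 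It therefore remains to estimate $\tfrac1{N+1}\sum_n\|\psi^n\|$.

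Next I would derive the error equation for $\psi^{n+1}$. Subtract \eqref{vms_pod_fe} from \eqref{cdcd_weak} evaluated at $t=t_{n+1}$, both tested against an arbitrary $\vr\in X^r\subset X$. Using $A(w_r^{n+1},\vr)=A(\unp,\vr)=a(\unp,\vr)+\alpha(\prp\nabla\unp,\prp\nabla\vr)$ (definition \eqref{ritz} of the Ritz projection and of $A$), together with $\urnp-\urn=(\psi^{n+1}-\psi^n)+(w_r^{n+1}-w_r^n)$ and $w_r^{n+1}-w_r^n=(\unp-\un)-(e^{n+1}-e^n)$, one is led to
\[
\frac{1}{\Delta t}(\psi^{n+1}-\psi^n,\vr)+A(\psi^{n+1},\vr)
=(\tau^{n+1},\vr)+(\opartial e^{n+1},\vr)-\alpha(\prp\nabla\unp,\prp\nabla\vr)
\qquad\forall\,\vr\in X^r,
\]
where $\tau^{n+1}:=u_t(t_{n+1})-\tfrac{\unp-\un}{\Delta t}$ is the backward-Euler consistency error and $\opartial e^{n+1}:=\tfrac{e^{n+1}-e^n}{\Delta t}$.

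Then take $\vr:=\psi^{n+1}$. The polarization identity gives $(\psi^{n+1}-\psi^n,\psi^{n+1})\ge\tfrac12(\|\psi^{n+1}\|^2-\|\psi^n\|^2)$; Assumption~\ref{assumption_coercivity} gives, exactly as around \eqref{lemma_ritz_8}, $A(\psi^{n+1},\psi^{n+1})=A_s(\psi^{n+1},\psi^{n+1})\ge \beta\|\psi^{n+1}\|^2+\varepsilon\|\nabla\psi^{n+1}\|^2+\alpha\|\prp\nabla\psi^{n+1}\|^2$ (the skew-symmetric part cancels); and Cauchy--Schwarz plus Young's inequality on the three right-hand side terms let one absorb a $\|\psi^{n+1}\|^2$-contribution into $\beta\|\psi^{n+1}\|^2$ and an $\alpha\|\prp\nabla\psi^{n+1}\|^2$-contribution into $\alpha\|\prp\nabla\psi^{n+1}\|^2$. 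As in Lemma~\ref{lemma_ritz}, this absorption is $\varepsilon$-uniform: coercivity supplies $L^2$-control with an $\varepsilon$-independent constant, and nothing is hidden inside $\varepsilon\|\nabla\psi^{n+1}\|^2$. Dropping the remaining nonnegative terms, multiplying by $\Delta t$, and summing $n=0,\dots,\ell-1$ yields, for every $\ell\le N$,
\[
\|\psi^{\ell}\|^2\le\|\psi^0\|^2
+C\,\Delta t\sum_{n=0}^{N-1}\Bigl(\|\tau^{n+1}\|^2+\|\opartial e^{n+1}\|^2+\alpha\,\|\prp\nabla\unp\|^2\Bigr),
\]
so that, taking square roots and averaging over $\ell$, $\tfrac1{N+1}\sum_\ell\|\psi^\ell\|$ is bounded by $\|\psi^0\|$ plus the square roots of the three summed quantities. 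Here $\|\psi^0\|\le\|u^0-u_r^0\|+\|e^0\|$ produces the $\|u^0-u_r^0\|$ term ($\|e^0\|$ being of Lemma~\ref{lemma_ritz} type); $\bigl(\Delta t\sum_n\|\tau^{n+1}\|^2\bigr)^{1/2}\le C\,\Delta t\,\|u_{tt}\|_{L^2(L^2)}$ by Taylor's theorem with integral remainder; $\bigl(\Delta t\sum_n\|\opartial e^{n+1}\|^2\bigr)^{1/2}$ is controlled by Corollary~\ref{corollary_ritz_t} (this is exactly why the finite-difference quotients were placed in the snapshot set), producing the $(1+\MrI+\alpha^{-1})^{1/2}$, $\sqrt{\varepsilon+\alpha}$, $\|u_t\|_{L^2(H^{m+1})}$, and $\sqrt{\sum_{j=r+1}^{d}\lambda_j}$ contributions; and the last, genuinely new, term $\sqrt\alpha\,\bigl(\Delta t\sum_n\|\prp\nabla\unp\|^2\bigr)^{1/2}$ is where hypothesis \eqref{theorem_error_0} enters. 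Indeed, since $L^R=\nabla X^R$, the $L^2$-orthogonality of $\pr$ gives $\|\prp\nabla\un\|=\inf_{v_R\in X^R}\|\nabla(\un-v_R)\|$; choosing $v_R=I_{h,R}(\un)$ (the analogue of $I_{h,r}(\un)$ with $X^r$ replaced by $X^R$) and repeating the splitting \eqref{lemma_ritz_4}--\eqref{lemma_ritz_4d} with $r$ replaced by $R$ gives $\tfrac1N\sum_n\|\prp\nabla\un\|\le C\,h^{m}\tfrac1N\sum_n\|\un\|_{m+1}+\sqrt{\sum_{j=R+1}^{d}\lambda_j}$, whose $\sqrt\alpha$-multiple is precisely the final bracket of \eqref{theorem_error_1}. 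Collecting all contributions yields \eqref{theorem_error_1}.

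The main obstacle I expect is the Ritz-consistency term $-\alpha(\prp\nabla\unp,\prp\nabla\psi^{n+1})$. It appears only because the Ritz projection is defined through $A$, which carries the artificial-viscosity term, while the continuous equation sees only $a$; hence it has no analogue in the classical POD-Galerkin analysis. Controlling $\|\prp\nabla\unp\|$ uniformly --- turning a projection residual measured in the VMS seminorm into a genuine approximation error that decays both in $h$ and in the POD tail $\sum_{j=R+1}^{d}\lambda_j$ --- is exactly what forces the structural hypothesis \eqref{theorem_error_0} on $L^R$, and is the one step that is not a routine transcription of \cite{heitmann2007subgridscale,kaya2004numerical}.
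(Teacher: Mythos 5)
Your proposal is correct and follows essentially the same route as the paper: the same Ritz-projection splitting $\un-\urn=\etan-\prn$, the same error equation tested with $\prnp$, the same use of Lemma~\ref{lemma_ritz} and Corollary~\ref{corollary_ritz_t} for the two projection-error contributions, Taylor expansion for the consistency term, and the identical use of hypothesis \eqref{theorem_error_0} to convert $\|\prp\nabla\unp\|$ into a best-approximation error in $\nabla X^R$. Your closing remark correctly identifies the $(a-A)$ consistency term as the one genuinely non-routine step, which is exactly where the paper invokes \eqref{theorem_error_0} as well.
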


\begin{proof}
We evaluate \eqref{cdcd_weak} at $t_{n+1}$, we let $v = v_r$, and then we add and subtract
$\displaystyle \left( \frac{\unp - \un}{\Delta t}, \vr\right)$:
\begin{eqnarray}
\begin{split} 
\left( 
\unp_t
- \frac{\unp - \un}{\Delta t} , \vr 
\right)
& + \left( \frac{\unp - \un}{\Delta t}, \vr \right) \\
& + a(\unp, \vr)
= (f^{n+1},\vr)  .
\label{theorem_error_1.5}
\end{split}
\end{eqnarray}
Subtracting \eqref{vms_pod_fe} from \eqref{theorem_error_1.5}, we obtain the error equation:
\begin{eqnarray}
\begin{split}
\hspace*{1.0cm} & \left( 
\unp_t
- \frac{\unp - \un}{\Delta t} , \vr 
\right)
+ \left( 
\frac{\unp - \urnp}{\Delta t} , \vr 
\right)
- \left( 
\frac{\un - \urn}{\Delta t} , \vr 
\right) \\
& \hspace*{3.5cm} + A(\unp - \urnp , \vr)
+ (a-A)(\unp , \vr)
= 0 .
\label{theorem_error_2}
\end{split}
\end{eqnarray}
We now decompose the error as
$
\un - \urn
= \bigl( \un - \wrn \bigr)
- \bigl( \urn - \wrn \bigr)
= \etan - \prn ,
$
which, by the triangle inequality, implies:
\begin{eqnarray}
\| \un - \urn \|
\leq \| \etan \| +  
\| \prn \| .
\label{theorem_error_3b}
\end{eqnarray}
We note that $\| \etan \|$ has already been bounded in Lemma \ref{lemma_ritz}.
Thus, in order to estimate the error, we only need to estimate $\| \prn \|$.
The error equation \eqref{theorem_error_2} can be written as:
\begin{eqnarray}
\begin{split}
\hspace*{1.0cm} && \left( 
\unp_t
- \frac{\unp - \un}{\Delta t} , \vr 
\right)
+ \left( 
\frac{\etanp - \etan}{\Delta t} , \vr 
\right)
- \left( 
\frac{\prnp - \prn}{\Delta t} , \vr 
\right) \\
&& + A(\etanp - \prnp , \vr)
+ (a-A)(\unp , \vr)
= 0 .
\label{theorem_error_4}
\end{split}
\end{eqnarray}
We pick $\vr := \prnp$ in \eqref{theorem_error_4}, we note that, 
since $\prnp \in X^r$, $A(\etanp , \prnp) = 0$, and we get:
\begin{eqnarray}
\begin{split}
\hspace*{1.0cm} & A(\prnp , \prnp)
+ \frac{1}{\Delta t} \, (\prnp - \prn , \prnp)
= \frac{1}{\Delta t} \, (\etanp - \etan , \prnp) \\
& \hspace*{4.0cm} + ( \rn , \prnp ) 
+ (a-A)(\unp , \prnp) ,
\label{theorem_error_5}
\end{split}
\end{eqnarray}
where $\displaystyle \rn = \unp_t - \frac{\unp - \un}{\Delta t}$.
We now start estimating all the terms in \eqref{theorem_error_5}.
The terms on the LHS of \eqref{theorem_error_5} are estimated 
as follows:
\begin{eqnarray}
&& A(\prnp , \prnp)
\geq \beta \, \| \prnp \|^2
+ \varepsilon \, \| \nabla \prnp \|^2
+ \alpha \, \| \prp \nabla \prnp \|^2 .
\label{theorem_error_6} \\
&& \frac{1}{\Delta t} \, (\prnp - \prn , \prnp)
\geq \frac{1}{\Delta t} \, \left(\| \prnp \|^2 - \| \prn \| \, \| \prnp \| \right) .
\end{eqnarray}
Now we estimate the RHS of \eqref{theorem_error_5} 
by using the Cauchy-Schwarz and Young's inequalities:
\begin{eqnarray}
\begin{split}
\hspace*{1.0cm} & \left(
\frac{1}{\Delta t} \, 
(\etanp - \etan)
+ \rn , \prnp
\right)
\leq \left\| \frac{1}{\Delta t} \, (\etanp - \etan) + \rn\right\| \, \| \prnp \|  \\
& \hspace*{3.0cm} \leq \frac{1}{2 \, \beta} \,  \left\| \frac{1}{\Delta t} \, (\etanp - \etan) + \rn\right\|^2
+ \frac{\beta}{2} \, \| \prnp \|^2 . \label{theorem_error_7} \\
\end{split}
\end{eqnarray}
\begin{eqnarray}
\begin{split}
\hspace*{1.0cm} & (a-A)(\unp , \prnp)
= - \alpha \, (\prp \nabla \unp , \prp \nabla \prnp)  \\
& \leq \alpha \, \| \prp \nabla \unp \| \, \| \prp \nabla \prnp \|
\leq \frac{\alpha}{2} \, \| \prp \nabla \unp \|^2
+ \frac{\alpha}{2} \, \| \prp \nabla \prnp \|^2 .
\label{theorem_error_8}
\end{split}
\end{eqnarray}
Using \eqref{theorem_error_6}-\eqref{theorem_error_8} and absorbing 
RHS terms into LHS terms, \eqref{theorem_error_5} now reads: 
\begin{eqnarray}
\begin{split}
\hspace*{1.0cm} & \frac{1}{\Delta t} \, (\| \prnp \|^2 - \| \prn \| \, \| \prnp \|)
+ \frac{\beta}{2} \, \| \prnp \|^2
+ \varepsilon \, \| \nabla \prnp \|^2 \\
& + \frac{\alpha}{2} \, \| \prp \nabla \prnp \|^2 
\leq \frac{1}{2 \, \beta} \,  
\left\| \frac{1}{\Delta t} \,(\etanp - \etan) + \rn\right\|^2
+ \frac{\alpha}{2} \, \| \prp \nabla \unp \|^2 .
\label{theorem_error_9}
\end{split}
\end{eqnarray}
By using Young's inequality, the first term on the LHS of \eqref{theorem_error_9} 
can be estimated as follows:
\begin{eqnarray}
\begin{split}
\hspace*{1.0cm} \| \prnp \|^2 - \| \prn \| \, \| \prnp \|
& \geq \| \prnp \|^2 
- \frac{1}{2} \, \| \prn \|^2 
- \frac{1}{2} \, \| \prnp \|^2 \\
& = \frac{1}{2} \, \| \prnp \|^2
-\frac{1}{2} \, \| \prn \|^2 .
\label{theorem_error_10}
\end{split}
\end{eqnarray}
Using \eqref{theorem_error_10} in \eqref{theorem_error_9} and multiplying by 
$2 \, \Delta t$, we get:
\begin{eqnarray}
&&  \hspace*{1.0cm} \| \prnp \|^2
-\| \prn \|^2
+ \Delta t \, \| \prnp \|_{1, \varepsilon, \alpha}^2 
\label{theorem_error_11} \\
&& \hspace*{2.0cm} \leq C \, \left( \Delta t \, \left\| \frac{1}{\Delta t} \, (\etanp - \etan) + \rn \right\|^2
+ \alpha \, \Delta t \, \| \prp \nabla \unp \|^2 \right) \nonumber \\
&& \hspace*{2.0cm} \leq C \, 
\left( \Delta t \, \left\| \frac{1}{\Delta t} \, (\etanp - \etan) \right\|^2 + \Delta t \, \| \rn \|^2
+ \alpha \, \Delta t \, \| \prp \nabla \unp \|^2 \right) . \nonumber
\end{eqnarray}
Summing from $n=0$ to $n=N-1$ in \eqref{theorem_error_11}, we get:
\begin{eqnarray}
\begin{split} 
\hspace*{1.0cm} & \max_{0 \leq n \leq N} \| \phi^n_r \|^2 
+ \sum_{n=0}^{N-1} \Delta t \, \| \prnp \|_{1, \varepsilon, \alpha}^2 
\leq C \, \Biggl( 
\Delta t \, \sum_{n=0}^{N-1} \left\| \frac{1}{\Delta t} \, (\etanp - \etan) \right\|^2 \\
& \hspace*{2.0cm} + \| \phi^0_r \|^2  
+  \Delta t \, \sum_{n=0}^{N-1} \left\| \rn \right\|^2
+ \, \alpha \, \Delta t \, \sum_{n=0}^{N-1} \| \prp \nabla \unp \|^2 \Biggr) .
\label{theorem_error_12}
\end{split}
\end{eqnarray}
Proceeding as in \cite{thomee2006galerkin} (see also \cite{heitmann2007subgridscale}), 
we estimate the first term on the RHS of \eqref{theorem_error_12} as follows.
We start by writing:
\begin{eqnarray}
\etanp - \etan
= \int_{t_{n}}^{t_{n+1}} \eta_t \, dt \, .
\label{theorem_error_12a}
\end{eqnarray}
Taking the $L^2$-norm in \eqref{theorem_error_12a} and applying the Cauchy-Schwarz 
inequality, we get:
\begin{eqnarray}
\begin{split} 
\hspace*{1.0cm} \| \etanp - \etan \|
& \leq \int_{t_{n}}^{t_{n+1}} 1 \, \| \eta_t \| \, dt
\leq \left( \int_{t_{n}}^{t_{n+1}} 1^2 \, dt \right)^{1/2} \, 
\left( \int_{t_{n}}^{t_{n+1}} \| \eta_t \|^2 \, dt \right)^{1/2} \\
&\leq (\Delta t)^{1/2} \, \left( \int_{t_{n}}^{t_{n+1}} \| \eta_t \|^2 \, dt \right)^{1/2} \, ,
\label{theorem_error_12b}
\end{split}
\end{eqnarray}
which implies
$\displaystyle
\Delta t \, \left\| \frac{1}{\Delta t} \, (\etanp - \etan) \right\|^2
\leq \left( \int_{t_{n}}^{t_{n+1}} \| \eta_t \|^2 \, dt \right)^{1/2}
$.
Summing from $n=0$ to $n=N-1$, we get
$\displaystyle
\Delta t\, \sum_{n=0}^{N-1} \left\| \frac{1}{\Delta t} \, (\etanp - \etan) \right\|^2
\leq \| \eta_t \|_{L^2(L^2)},
$
which was bound in Corollary \ref{corollary_ritz_t}.
We thus obtain:
\begin{eqnarray}
&& \Delta t\, \sum_{n=0}^{N-1} \left\| \frac{1}{\Delta t} \, (\etanp - \etan) \right\|^2
\leq C \, 
\Biggl\{
\left(
1
+ {\|M_r^{-1}\|_2}
+ \alpha^{-1}
\right)^{1/2} \label{theorem_error_12c} \\
&& \hspace*{4.0cm} \left(
h^{m+1}  \| u_t \|_{L^2(H^{m+1})}
+ \sqrt{\sum_{j = r+1}^{d} \lambda_j}
\right)  \nonumber \\
&& \hspace*{3.0cm} + \sqrt{\varepsilon + \alpha} \, 
\left(
h^{m} \, \| u_t \|_{L^2(H^{m+1})}
+ \sqrt{\sum_{j = r+1}^{d} \lambda_j}
\right)
\Biggr\}  . \nonumber
\end{eqnarray}
To estimate the third term on the RHS of \eqref{theorem_error_12}, we use a Taylor series
expansion of $\un$ around $\unp$:
\begin{eqnarray}
\un
= \unp
- \unp_t \, \Delta t
+ \int_{t_{n}}^{t_{n+1}} u_{tt}(s) \, (t_n - s) \, ds \, .
\label{theorem_error_12d}
\end{eqnarray}
Taking the $L^2$-norm in \eqref{theorem_error_12d} and applying the Cauchy-Schwarz 
inequality, we get
$\displaystyle
\| \rn \| 
\leq \int_{t_{n}}^{t_{n+1}}  1 \, \| u_{tt} \| \, ds
\leq (\Delta t)^{1/2} \, \| u_t \|_{L^2(L^2)} \, .
$
Summing from $n=0$ to $n=N-1$, we get:
\begin{eqnarray}
\Delta t\, \sum_{n=0}^{N-1} \left\| \rn \right\|^2
\leq  \Delta t^2 \, \| u_{tt} \|^{2}_{L^2(L^2)}  \, .
\label{theorem_error_14} 
\end{eqnarray}
To estimate the last term on the RHS of \eqref{theorem_error_12}, we
use the fact that $L^R = \nabla X^R$ (assumption \eqref{theorem_error_0}).
We emphasize that this is the {\it only instance} in the proof where the assumption 
$L^R = \nabla X^R$ is used.
Thus, we get:
\begin{eqnarray}
&& \alpha \, \Delta t \, \sum_{n=0}^{N-1} \| \prp \nabla \unp \|^2 
= \alpha \, \Delta t \, \sum_{n=0}^{N-1} \| \nabla \unp  - \pr \nabla \unp\|^2 \label{theorem_error_15} \\
&& \hspace*{2.5cm}  \stackrel{\eqref{theorem_error_0}}{\leq} C \, \alpha \, \frac{1}{N} \, \sum_{n=0}^{N-1} 
\inf_{v_R \in X^R} \| \nabla \unp  - \nabla v_R \|^2
\nonumber \\
&& \hspace*{2.5cm} \stackrel{\eqref{pod_error_formula}, \eqref{assumption_approximability_1}}{\leq} 
C \, \alpha  \, 
\left(
h^{m} \, \frac{1}{N} \, \sum_{n=1}^{N} \| u^n \|_{m+1 }
+ \sqrt{\sum_{j = R+1}^{d} \lambda_j}
\right)^2 \, . \nonumber
\end{eqnarray}

Using \eqref{theorem_error_12c}, \eqref{theorem_error_14}, and \eqref{theorem_error_15} in 
\eqref{theorem_error_12}, 
the obvious inequality 
$\displaystyle \max_{0 \leq n \leq N} \| \prn \| 
\geq \frac{1}{N+1} \, \sum_{n=0}^{N} \| \prn \|$,
inequality \eqref{theorem_error_3b}, and the 
estimates in Lemma \ref{lemma_ritz}, we obtain the error estimate \eqref{theorem_error_1}.
\end{proof}

\section{Numerical Results}
\label{s_numerics}

The goal of this section is twofold:
(i) to show that the new VMS-POD model \eqref{vms_pod} is significantly more
stable numerically than the standard POD-G model \eqref{pod_g}; and
(ii) to illustrate numerically the theoretical error estimate \eqref{theorem_error_1}.
We also use Theorem \ref{theorem_error} to provide theoretical guidance in 
choosing an optimal value for the artificial viscosity coefficient $\alpha$ and use this algorithm 
within our numerical framework.
Finally, we show that the VMS-POD model \eqref{vms_pod} displays a relatively
low sensitivity with respect to changes in the diffusion coefficient $\varepsilon$.
Thus, we provide numerical support for the theoretical estimate \eqref{theorem_error_1},
which is {\em uniform} with respect to $\varepsilon$.

The mathematical model used for all the numerical tests in this section is the 
convection-dominated convection-diffusion equation \eqref{cdcd} with the following
parameter choices:
spatial domain $\Omega=[0, 1]\times[0, 1]$,
time interval $[0, T]=[0, 1]$,
diffusion coefficient $\varepsilon = 1\times 10^{-4}$,
convection field ${\bf b} = [\cos\frac{\pi}{3}, \sin\frac{\pi}{3}]^{T}$, and 
reaction coefficient $g=1$. 
The forcing term $f$ and initial condition $u_0(x)$ are chosen to satisfy the exact
solution $u(x,y,t) = 0.5\sin(\pi x)\sin(\pi y) \left[ \tanh\left(\frac{x+y-t-0.5}{0.04}\right) +1 \right]$, which is 
similar to that used in \cite{guermond1999stabilization}.
As in the theoretical developments in Section \ref{s_estimates}, in this section we
employ the finite element method for spatial discretization and the backward Euler
method for temporal discretization of all models investigated.
All computations are carried out on a PC with 3.2 GHz Intel Xeon Quad-core processor.

We start by comparing the VMS-POD model \eqref{vms_pod} to the standard POD-G
model \eqref{pod_g}.
To generate the POD basis, we first run a DNS
with the following parameters: piecewise quadratic finite elements, uniform triangular mesh 
with mesh-size $h = 0.01$, and time-step $\Delta t = 10^{-4}$.
A mesh refinement study indicates that DNS mesh resolution is achieved.
The average DNS error is 
$\frac{1}{N+1}{\sum\limits_{n=0}^{N}\|u^n-u_h^n\|} = 2.04\times10^{-4}$, where 
$N=1000$, and $u^n$ and $u^n_h$ are the exact solution and the finite element 
solution at $t=n\Delta t$, respectively.
The CPU time of the DNS is $9.42 \times 10^4\, s$.
Since the forcing term is time-dependent, the global load vectors are stored for later 
use in all the ROMs. 
The POD modes are generated in $H^1$ by the  method of snapshots; 
the rank of the data set is $104$.
For both POD-ROMs (POD-G and VMS-POD), we use the same number of POD basis
functions: $r = 40$.
\begin{table}[ht]
\centering
\caption{Average errors for the POD-G model \eqref{pod_g} with different values of $r$. 
Note that the POD-G model yields poor results.}
\label{table_PODG}
\begin{tabular}{|c|c|c|c|c|}
\hline
$r$&20&40&60&80\\
\hline
$\AvE$&$1.25\times 10^{-1}$&$1.11\times 10^{-1}$&$9.28\times 10^{-2}$&$8.20\times 10^{-2}$\\
\hline
\end{tabular}
\end{table}

We first test the POD-G model \eqref{pod_g}. 
The CPU time for the POD-G model is $96.4\, s$, which is three orders of magnitude
lower than that of a brute force DNS.
The numerical solution at $t = 1$ is shown in Figure \ref{Comp_models} for both the
DNS (top) and the POD-G model (middle).
It is clear from this figure that, although the first $40$ POD modes capture $99.99\%$
of the system's kinetic energy, the POD-G model yields poor quality results and 
displays strong numerical oscillations. 
This is confirmed by the POD-G model's high average error $\AvE =1.11\times 10^{-1}$, 
where $u_r^n$ is the POD-G model's solution at $t=n\Delta t$.
Indeed, the POD-G model's average error is almost three orders of magnitude higher
than the average error of the DNS.
The average errors for different values of $r$ listed in Table \ref{table_PODG} show that
increasing the number of POD modes ($r$) does not decrease significantly the average
error.
It is thus clear that the straightforward POD-G model, although computationally efficient,
is highly inaccurate.

Next, we investigate the VMS-POD model \eqref{vms_pod}.
We make the following parameter choices: $R=20$ and $\alpha=4.29\times 10^{-2}$.
The motivation for this choice is given later in this section.
The CPU time for the VMS-POD model \eqref{vms_pod} is $106.2\,s$, which is close 
to the CPU time of the POD-G model \eqref{pod_g}.
The numerical solution at $t = 1$ for the VMS-POD model is shown in Figure 
\ref{Comp_models} (bottom).
It is clear from this figure that the VMS-POD model is much more accurate than the
POD-G model.
Indeed, the VMS-POD model results are much closer to the DNS results than the
POD-G model results, since the numerical oscillations displayed by the latter are
dramatically decreased.
This is confirmed by the VMS-POD model's average error  $\AvE = 4.48\times 10^{-3}$, 
where $u_r^n$ is the VMS-POD solution at $t=n\Delta t$; this error is more than $20$ 
times lower than the error of the POD-G model.
\begin{figure}[h]
\centering
\caption{Numerical solution at $t = 1$:
DNS (top), POD-G model \eqref{pod_g} (middle), and VMS-POD model \eqref{vms_pod}
(bottom).
Note that the VMS-POD model is much more accurate than the POD-G model, decreasing
the unphysical oscillations of the latter.
The CPU times for both the VMS-POD and  POD-G models are three orders of magnitude 
lower than the CPU time for the DNS.\vspace*{0.1cm}
}\label{Comp_models}
\includegraphics[width=0.6\textwidth]{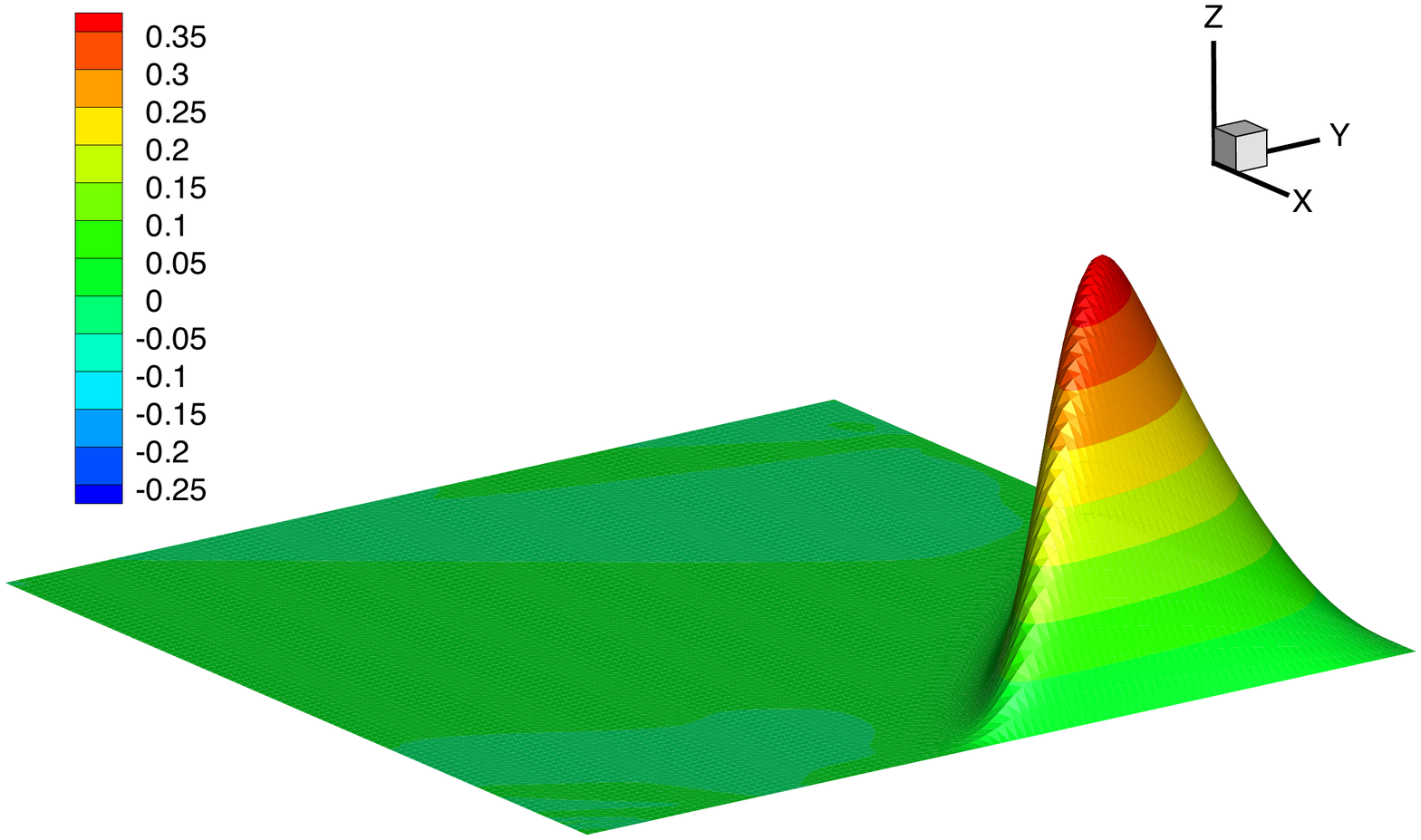}
\includegraphics[width=0.6\textwidth]{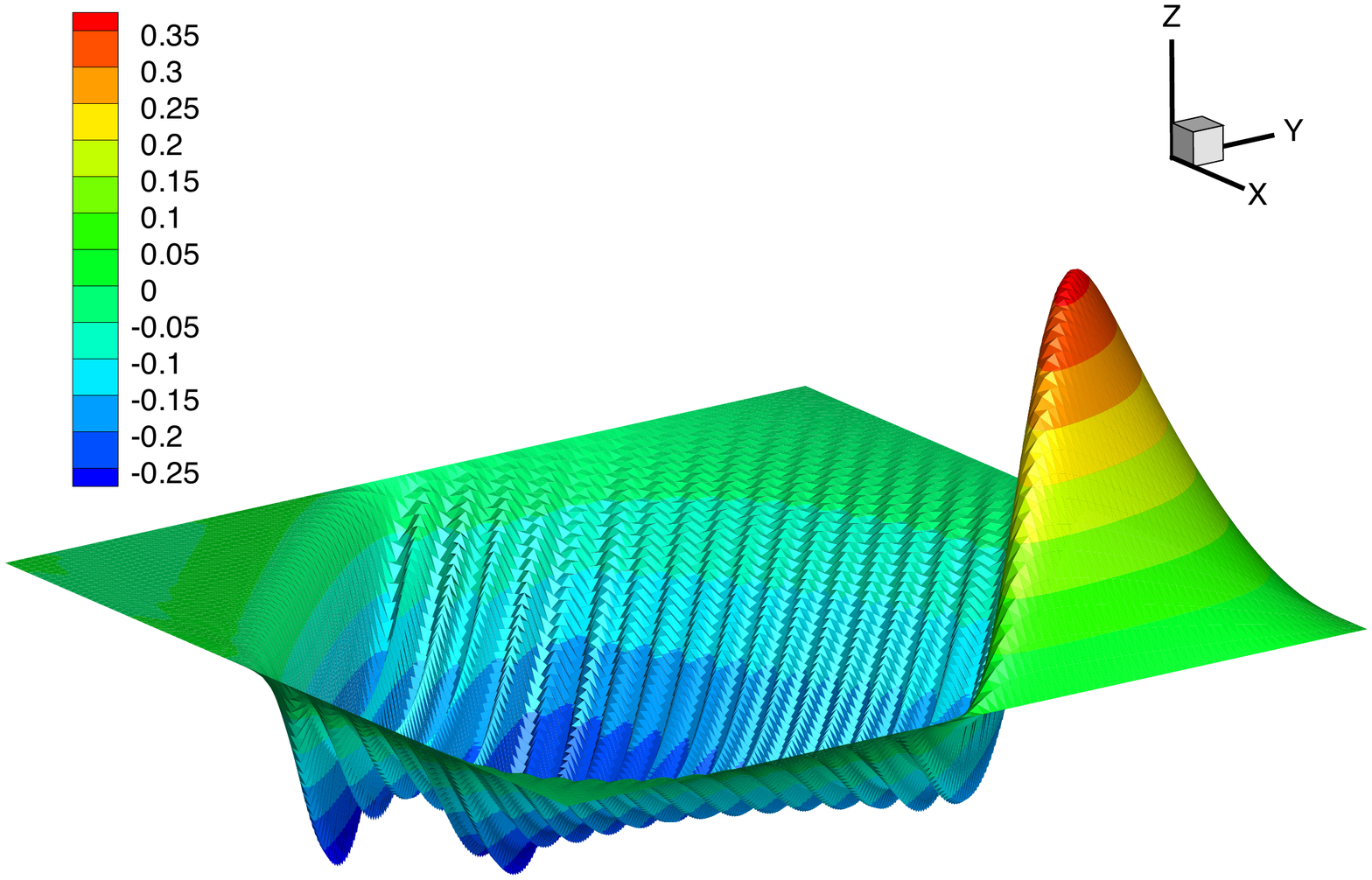}
\includegraphics[width=0.6\textwidth]{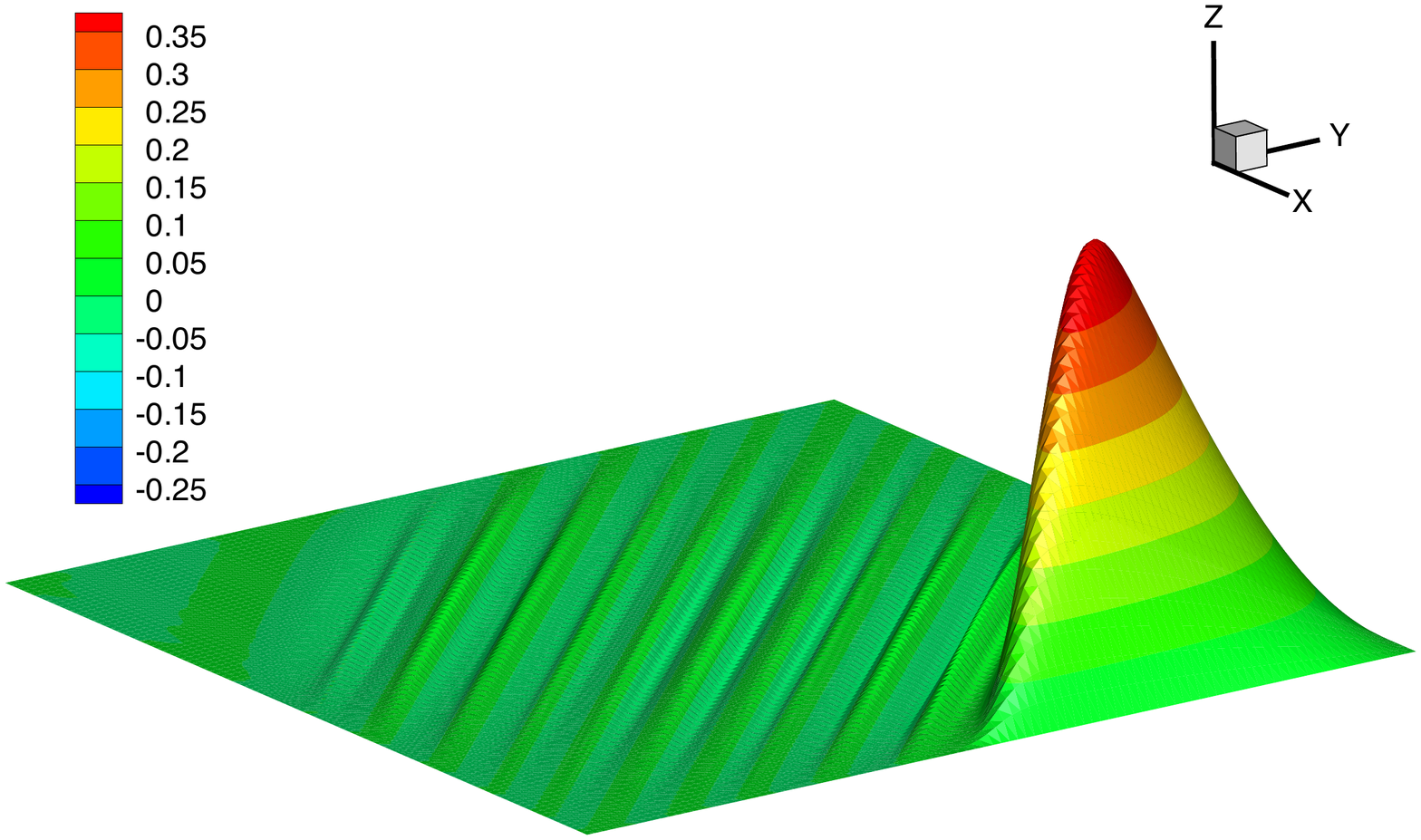}
\end{figure}
In conclusion, the VMS-POD model \eqref{vms_pod} dramatically decreases the error
of the POD-G model \eqref{pod_g} by adding numerical stabilization, while keeping the
same level of computational efficiency.
\begin{table}[ht]
\centering
\caption{VMS-POD model's average error $e = \AvE$ and its $e_3$ component 
for different values of $R$.
}\label{Table_g1_R}
\begin{tabular}{|c|c|c|}
\hline
{$R$}&{$e_3$}&{$\AvE$} \\
\hline
1& 1.29$\times 10^{-1}$& 2.55$\times 10^{-2}$\\
4& 9.34$\times 10^{-2}$& 1.78$\times 10^{-2}$\\
7& 6.69$\times 10^{-2}$& 1.37$\times 10^{-2}$\\
10& 4.68$\times 10^{-2}$& 9.80$\times 10^{-3}$\\
13& 3.20$\times 10^{-2}$& 6.99$\times 10^{-3}$\\
\hline
\end{tabular}
\end{table}

\smallskip
We now turn our attention to the second major goal of this section - the numerical 
illustration of the theoretical error estimate \eqref{theorem_error_1}.
Specifically, we investigate whether the asymptotic behavior of the RHS of estimate
\eqref{theorem_error_1} with respect to $R$ is reflected in the numerical results.
We focus on the asymptotic behavior with respect to $R$ since this is the main 
parameter introduced by the VMS formulation; the asymptotic behavior with respect
to $r$ was investigated in \cite{borggaard2011artificial}, whereas the asymptotic behavior 
with respect to $h$ and $\Delta t$ is standard \cite{BS94,thomee2006galerkin}.
To investigate the asymptotic behavior with respect to $R$, we have to ensure that
$\sqrt{\alpha}\PODR$ (the only term that depends on $R$) dominates all the other
terms on the RHS of \eqref{theorem_error_1}.
To this end, we start collecting all the terms that depend on the exact solution $u$
and we include them in the generic constant $C$.
Next, we assume that the POD interpolation error in the initial condition $\| u^0 - u_r^0\|$
is negligible.
We also assume that the time-step is small enough to neglect 
$\Delta t \, \| u_{t t} \|_{L^2(L^2)}$. 
With these assumptions, the error estimate \eqref{theorem_error_1} can now be written 
as $e \leq C \left( e_1 + e_2 + e_3 \right)$, where $e$ is the VMS-POD model's average 
error, $C$ a generic constant independent of $r, R, h, \Delta t$ and $\alpha$, 
$e_1 = \MrI^{\frac{1}{2}}h^{m+1}$, $e_2=\MrI^{\frac{1}{2}}\PODr$, and 
$e_3=\sqrt{\alpha}\PODR$.
To ensure that $e_3$ dominates the other terms, we choose $r = 100$ and consider
relatively low values for $R$.
This choice for $r$, which is not optimal for practical computations, ensures, however,
that $e_3$ dominates $e_2$.
We also note that, when $h$ is small, $e_3$ dominates $e_1$ too.
Thus, to investigate the asymptotic behavior with respect to $R$ of the RHS of 
\eqref{theorem_error_1}, we fix $\alpha =5\times 10^{-3}$, vary $R$ from $1$ to $14$, 
and monitor the changes in $e_3$.
We restrict $R$ to this parameter range to ensure that $\PODR$ (and thus $e_3$) 
dominates $e_2$ and $e_1$.
Table \ref{Table_g1_R} lists the VMS-POD model's average error $e = \AvE$ and its 
$e_3$ component for different values of $R$.
We emphasize that, in this case, $e_3$ dominates the other two error components
$e_1 = 3.81\times 10^{-3}$ and $e_2 = 2.87\times 10^{-3}$.
To see whether the theoretical linear dependency predicted by the theoretical error
estimate \eqref{theorem_error_1} is recovered in the numerical results in Table 
\ref{Table_g1_R}, we utilize a linear regression analysis in Figure \ref{Figure_g1_R}.
This plot shows that the rate of convergence of $e$ with respect to $e_3$ is $0.9$, 
which is close to the theoretical value of $1$ predicted by \eqref{theorem_error_1}.
We believe that this slight discrepancy is due to the fact that the mesh-size $h = 0.01$
that we have employed in this numerical investigation is not small enough for our 
asymptotic study.

Summarizing the results above, we conclude that the theoretical error estimate in
\eqref{theorem_error_1} is recovered asymptotically (with respect to $R$) in our
numerical experiments.

\begin{figure}[h]
\centering
\caption{Linear regression of VMS-POD model's average error with respect to $e_3$. 
The convergence rate is $0.9$, which is close to the theoretical value of $1$ predicted 
by \eqref{theorem_error_1}.}
\label{Figure_g1_R}
\includegraphics[width=0.7\textwidth]{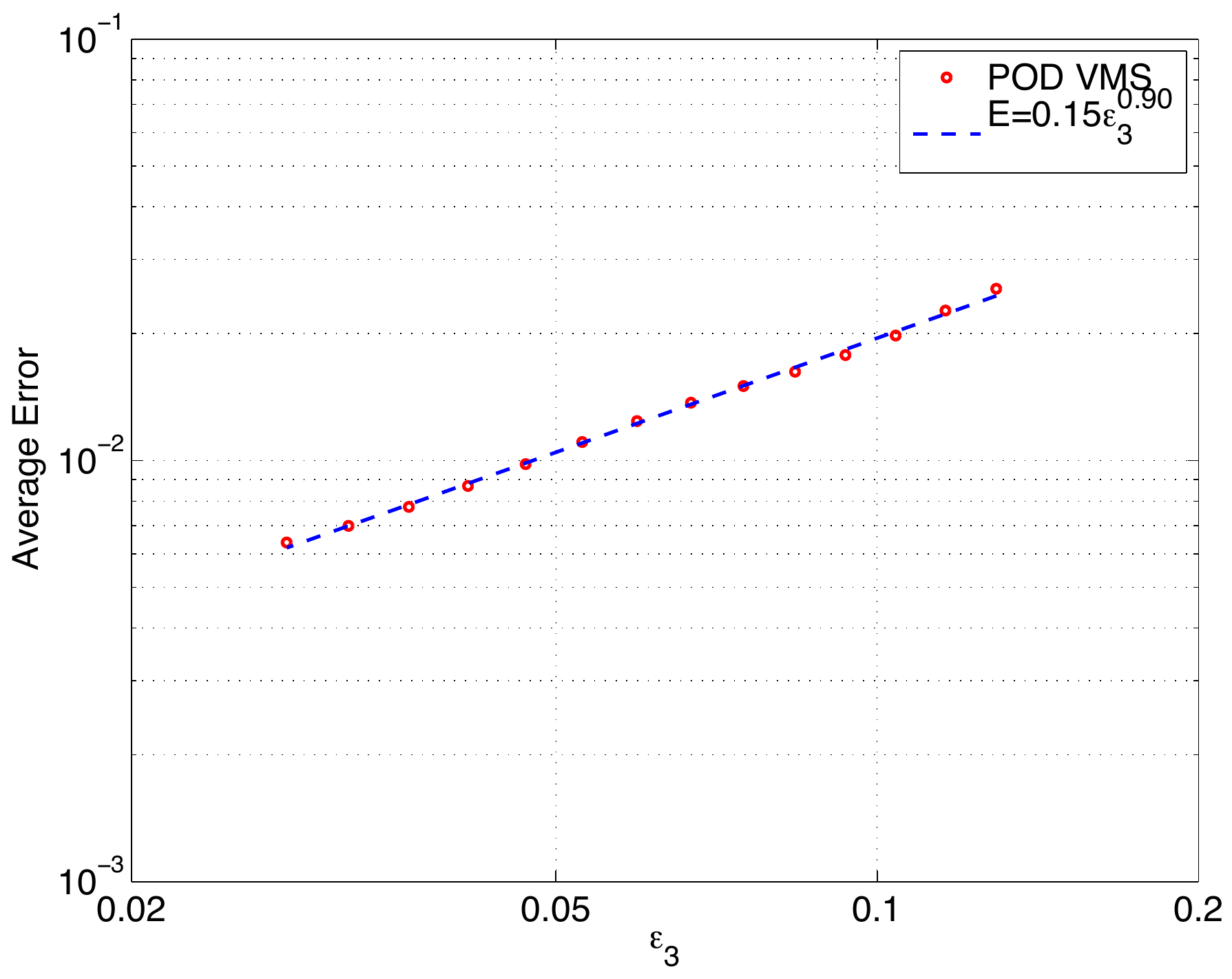} 
\end{figure}

\smallskip
Next, we use Theorem \ref{theorem_error} to provide theoretical guidance in choosing
an optimal value for the artificial viscosity coefficient $\alpha$.
The main challenge is that the theoretical error estimate \eqref{theorem_error_1} is 
{\em asymptotic} with respect to $h, \Delta t$ and $r$, while in practical computations
we are using small, yet non-negligible values for these parameters.
Furthermore, the generic constant $C$ is problem-dependent and can play a significant
role in practical computations.
Notwithstanding these hurdles, we choose a value for $\alpha$ that minimizes the RHS
of \eqref{theorem_error_1}: 
$\widetilde{\alpha}=\frac{h^{m+1}+\PODr}{2h^m+\PODr+\PODR}$.
In the derivation of this formula, we made the same assumptions as those made in the
numerical investigation of the asymptotic behavior of the VMS-POD model's error and
we again considered that \eqref{theorem_error_1} can be written as 
$e \leq C \left( e_1 + e_2 + e_3 \right)$.
We note that, if $\PODr<<\PODR$ and $h^{m}<<\PODR$, then $\widetilde{\alpha}$
becomes too small in practical computations and the VMS-POD model becomes
similar to the inaccurate POD-G model.
To circumvent this, we use in our numerical tests a ``clipping" procedure by setting
$\alpha^{*} = \max\left\{\widetilde{\alpha},\frac{h}{2}\right\}$.

Table \ref{comp_sugA} lists the VMS-POD model's average error $e = \AvE$ for the 
following values of $r, R$ and $\alpha$: $r = 20, 40$ and $60$; $R$ from $5$ to $r-5$ 
in increments of $5$; and $\alpha = 0.01 \, \alpha^{*}, \, \alpha^{*}$, and 
$100 \, \alpha^{*}$.
Note that the VMS-POD model consistently performs best for $\alpha = \alpha^{*}$.
The only two slight deviations from this rule are for $r = 60$ ($R = 20$ and $R = 30$);
we again believe that this is due to the mesh-size $h = 0.01$, which is not small 
enough for the asymptotic regime in Theorem \ref{theorem_error}.

\begin{table}[htb]
\centering
\caption{VMS-POD model's average error $e = \AvE$ for different values of $r$ and $R$,
and  $\alpha = 0.01 \, \alpha^{*}, \, \alpha^{*}$, and $100 \, \alpha^{*}$.
Note that the VMS-POD model consistently performs best for $\alpha = \alpha^{*}$.}
\label{comp_sugA}
{\small
\begin{tabular}{|c|c|cc|cc|cc|}
\hline
{$r$}&{$R$}&{$0.01\alpha^{*}$}&{$e$}&{$\alpha^{*}$}&{$e$}&{$100\alpha^{*}$}&{$e$}\\
\hline
\multirow{3}{*}{20}&5& $1.2\times10^{-3}$&$1.0\times10^{-1}$&$1.2\times10^{-1}$
&$5.8\times10^{-2}$&$1.2\times10^{1}$&$7.8\times 10^{-2}$\\
{}&10&$2.0\times10^{-3}$&$9.5\times10^{-2}$&$2.0\times10^{-1}$
& $2.4\times10^{-2}$&$2.04\times10^{1}$&$2.6\times10^{-2}$\\
{}&15&$3.3\times10^{-3}$&$8.2\times10^{-2}$&$3.3\times10^{-1}$
& $2.0\times10^{-2}$&$3.3\times10^{1}$&$2.5\times10^{-2}$\\
\hline
\multirow{5}{*}{40}&5&$6.4\times10^{-5}$&$1.09\times10^{-1}$&$6.4\times10^{-3}$&$3.0\times10^{-2}$&$6.4\times10^{-1}$ &$7.2 \times10^{-2}$\\
{}&10&$1.1\times10^{-4}$&$1.0\times10^{-1}$&$1.1\times10^{-2}$&$1.8\times10^{-2}$&$1.1\times10^{0}$ &$2.5\times10^{-2}$\\
{}&20&$4.2\times10^{-4}$&$9.7\times10^{-2}$&$4.2\times10^{-2}$&$4.4\times10^{-3}$&$4.2\times10^{0}$ &$4.1\times10^{-3}$\\
{}&30&$1.7\times10^{-3}$&$6.8\times10^{-2}$&$1.7\times10^{-1}$&$8.1\times10^{-3}$&$1.7\times10^{1}$ &$1.0\times10^{-2}$\\
{}&35&$3.0\times10^{-3}$&$4.9\times10^{-2}$&$3.0\times10^{-1}$&$2.1\times10^{-2}$&$3.0\times10^{1}$ &$2.4\times10^{-2}$\\
\hline
\multirow{3}{*}
{60}&5&{$5.0\times10^{-5}$}&{$8.7\times10^{-2}$}&$5.0\times10^{-3}$&$1.8\times10^{-2}$&{$5.0\times10^{-1}$}&{$7.0\times10^{-2}$}\\
{}&10&{$5.0\times10^{-5}$}&{$8.7\times10^{-2}$}&$5.0\times10^{-3}$&$1.3\times10^{-2}$&{$5.0\times10^{-1}$}&{$2.4\times10^{-2}$}\\
{}&20&{$5.0\times10^{-5}$}&{$8.7\times10^{-2}$}&$5.0\times10^{-3}$&$1.0\times10^{-2}$&{$5.0\times10^{-1}$}&{$3.9\times10^{-3}$}\\
{}&30&{$1.2\times10^{-4}$}&{$8.0\times10^{-2}$}&$1.2\times10^{-2}$&$4.4\times10^{-3}$&{$1.2\times10^{0}$}&{$7.4\times10^{-4}$}\\
{}&40&{$5.4\times10^{-4}$}&{$5.5\times10^{-2}$}&$5.4\times10^{-2}$&$1.2\times10^{-3}$&{$5.4\times10^{0}$}&{$2.4\times10^{-3}$}\\
{}&50&{$1.8\times10^{-3}$}&{$2.6\times10^{-2}$}&$1.8\times10^{-1}$&$1.3\times10^{-2}$&{$1.8\times10^{1}$}&{$1.4\times10^{-2}$}\\
{}&55&{$2.9\times10^{-3}$}&{$2.2\times10^{-2}$}&$2.9\times10^{-1}$&$1.1\times10^{-2}$&{$2.9\times10^{1}$}&{$1.2\times10^{-2}$}\\
\hline
\end{tabular}
}
\end{table}

\smallskip
Finally, we investigate numerically the VMS-POD model's sensitivity with respect
to changes in the diffusion coefficient $\varepsilon$.
To this end, we run the VMS-POD model \eqref{vms_pod} with the same parameters
as above ($r = 40, \, R = 20$ and $\alpha=\alpha^{*}$) for different values of the
diffusion coefficient: $\varepsilon = 10^{-2}, 10^{-4}$ and $10^{-6}$.
Table \ref{table_error_epsilons} lists the average errors for DNS, POD-G and 
VMS-POD models for different values of $\varepsilon$.
It is clear from this table that the POD-G model's average error is significantly higher
than the error of the DNS.
The VMS-POD model, however, performs well for all values of $\varepsilon$ and displays
a low sensitivity with respect to changes in the diffusion coefficient.
Thus, we provide numerical support for the theoretical estimate \eqref{theorem_error_1}, 
which is {\em uniform} with respect to $\varepsilon$.

\begin{table}[htb]
\centering
\caption{Average errors of DNS, POD-G and VMS-POD models for different values of the
diffusion coefficient $\varepsilon$.
The POD-G model performs poorly.
The VMS-POD model performs well and displays low sensitivity with respect to changes in
$\varepsilon$.}
\label{table_error_epsilons}
{\small
\begin{tabular}{|c|c|c|cc|}
\hline
\multirow{2}{*}{$\varepsilon$}&{${DNS}$}&{POD-G}&\multicolumn{2}{c|}{VMS-POD}\\
\cline{2-5}
{}&{$\frac{1}{N+1}\sum\limits_{n=0}^{N}\|u^n-u_h^N\|$}&{$\AvE$}&$\alpha$&$\AvE$\\
\hline
{$10^{-2}$}&{$1.10\times10^{-4}$}&{$1.10\times10^{-2}$}&{$4.05\times10^{-2}$}&{$4.27\times10^{-3}$}\\
{$10^{-4}$}&{$2.04\times10^{-4}$}&{$1.11\times10^{-1}$}&{$4.29\times10^{-2}$}&{$4.48\times10^{-3}$}\\
{$10^{-6}$}&{$1.88\times10^{-4}$}&{$1.17\times10^{-1}$}&{$9.65\times10^{-2}$}&$4.05\times10^{-3}$\\
{$10^{-8}$}&{$2.46\times10^{-4}$}&{$1.17\times10^{-1}$}&{$1.01\times10^{-1}$}&{$4.05\times10^{-3}$}\\
\hline
\end{tabular}
}
\end{table}

\section{Conclusions}
\label{s_conclusions}

We presented a new VMS closure modeling strategy for the numerical stabilization 
of POD-ROMs of convection-dominated equations.
The new POD-ROM, denoted VMS-POD, utilizes an artificial viscosity term to add 
numerical stabilization to the model.
Following the guiding principle of the VMS methodology, we only add artificial 
viscosity to the small resolved scales.
Thus, no artificial viscosity is used for the large resolved scales.
The POD setting represents an ideal framework for the VMS approach, since 
the POD modes are listed in descending order of their kinetic energy content.

A thorough numerical analysis for the finite element discretization of the new 
VMS-POD model was presented.
The numerical tests showed the increased numerical stability of the new VMS-POD 
model and illustrated the theoretical error estimates.
We also employed the theoretical error estimates to provide guidance in choosing
the artificial viscosity coefficient in practical computations.
We emphasize that the theoretical error estimates were uniform with respect to
$\varepsilon$, the diffusion coefficient.
The numerical tests confirmed the theoretical results:
The average error of the VMS-POD model showed a low sensitivity with respect
to changes in $\varepsilon$.

Although the new VMS-POD model targets general convection-domainted problems,
it was analyzed theoretically and tested numerically  by using the convection-dominated
convection-diffusion equations.
We chose this simplified mathematical and numerical setting as a first step in a thorough
investigation of the new VMS-POD model.
Next, we will utilize the new VMS-POD model in the numerical simulation of turbulent
flows, such as $3D$ flow past a circular cylinder \cite{wang2011two}.
We also note that, to our knowledge, this is the first time that the VMS formulation used
in \cite{layton2002connection} for the numerical stabilization of finite element discretizations
has been used in a POD setting.
We will investigate in a future study the alternative VMS formulation proposed in 
\cite{guermond1999stabilization} and compare it with the VMS-POD model that we
introduced in this report.

\bibliographystyle{plain}

\end{document}